\documentclass[12pt]{amsart}
\usepackage{amsfonts}
\usepackage{mathrsfs}
\usepackage{amscd,amsmath,amssymb,amsfonts}
\usepackage[all]{xy}
\theoremstyle{plain}
\newtheorem{thm}{Theorem}
\newtheorem{lem}[thm]{Lemma}

\newtheorem{prop}[thm]{Proposition}
\newtheorem{conj}[thm]{Conjecture}
\theoremstyle{definition}
\newtheorem{defn}[thm]{Definition}

\newtheorem{rmk}[thm]{Remark}

\newtheorem{ex}[thm]{Example}

\numberwithin{thm}{section} \numberwithin{equation}{section}

\newcommand{\ga}[2]{\begin{gather}\label{#1}#2 \end{gather}}


\newcommand{\sE}{{\mathcal E}}
\newcommand{\sF}{{\mathcal F}}
\newcommand{\sG}{{\mathcal G}}

\newcommand{\sK}{{\mathcal K}}
\newcommand{\sL}{{\mathcal L}}

\newcommand{\sO}{{\mathcal O}}
\newcommand{\sP}{{\mathcal P}}
\newcommand{\sQ}{{\mathcal Q}}
\newcommand{\sR}{{\mathcal R}}
\newcommand{\sS}{{\mathcal S}}

\newcommand{\sU}{{\mathcal U}}
\newcommand{\sV}{{\mathcal V}}



\begin{document}
\title{elliptic curves in moduli space of stable bundles}
\author{Xiaotao Sun}
\address{Chinese Academy of Mathematics and Systems Science, Beijing, P. R. of China}
\email{xsun@math.ac.cn}
\date{September 8, 2010}
\thanks{Partially supported by NBRPC 2011CB302400 and NSFC (No. 10731030)}

\maketitle
\begin{quote}
$\qquad\,$      Dedicated to the memory of Eckart Viehweg
\end{quote}

\begin{abstract}Let $M$ be the moduli space of rank $2$ stable bundles
with fixed determinant of degree $1$ on a smooth projective curve
$C$ of genus $g\ge 2$. When $C$ is generic, we show that any
elliptic curve on $M$ has degree (respect to anti-canonical divisor
$-K_M$) at least 6, and we give a complete classification for
elliptic curves of degree $6$. Moreover, if $g>4$, we show that any
elliptic curve passing through the generic point of $M$ has degree
at least $12$. We also formulate a conjecture for higher rank.
\end{abstract}

\section{Introduction}

Let $C$ be a smooth projective curve of genus $g\ge 2$ and $\sL$ be
a line bundle of degree $d$ on $C$. Let $M:=\sS\sU_C(r,\sL)^s$ be
the moduli space of stable vector bundles on $C$ of rank $r$ and
with fixed determinant $\sL$, which is a smooth qusi-projective Fano
variety with ${\rm Pic}(M)=\Bbb Z\cdot \Theta$ and
$-K_M=2(r,d)\Theta$, where $\Theta$ is an ample divisor. Let $B$ be
a smooth projective curve of genus $b$. The degree of a curve $\phi:
B\to M$ is defined to be ${\rm deg}\phi^*(-K_M)$. It seems quite
natural to ask what is the lower bound of degree and to classify the
curves of lowest degree.

When $B=\Bbb P^1$, we have determined all $\phi:\Bbb P^1\to M$ with
lowest degree in \cite{MS} and all $\phi:\Bbb P^1\to M$ passing
through the generic point of $M$ with lowest degree in \cite{Sun}.
In fact, one can construct $\phi:\Bbb P\to M$ for various projective
spaces $\Bbb P$ such that $\phi^*(-K_M)=\sO_{\Bbb P}(2(r,d))$, and
$\phi:\Bbb P^{r-1}\to M$ passing through the generic point of $M$
such that $\phi^*(-K_M)=\sO_{\Bbb P^{r-1}}(2r)$. Then it was proved
in \cite{MS} and \cite{Sun} that the images of lines in these
projective spaces exhaust all minimal rational curves on $M$ (resp.
minimal rational curves passing through generic point of $M$). Some
applications of the results were also pointed out in \cite{MS} and
\cite{Sun}. Thus it is natural to ask what are the situation when
$b>0$. This note is a start to study the case of $b=1$. It may
happen that the normalization of $\phi(B)$ is $\Bbb P^1$. To avoid
this case, we call $\phi:B\to M$ \textbf{an essential elliptic
curve} of $M$ if the normalization of $\phi(B)$ is an elliptic
curve.

It is easy to construct essential elliptic curves of degree $6(r,d)$
on $M$, and essential elliptic curves of degree $6r$ that pass
through the generic point of $M$. For example, for smooth elliptic
curves $B\subset\Bbb P$ of degree $3$, the morphism $\phi:\Bbb P\to
M$ defines essential elliptic curves $\phi|_B:B\to M$ of degree
$6(r,d)$ (See Example \ref{ex3.6}), which are called
\textbf{elliptic curves of split type}. For smooth elliptic curves
$B\subset\Bbb P^{r-1}$ of degree $3$, the morphism $\phi:\Bbb
P^{r-1}\to M$ defines essential elliptic curves $\phi|_B:B\to M$ of
degree $6r$ passing through the generic point of $M$ (See Example
\ref{ex3.5}), which are called \textbf{elliptic curves of Hecke
type}. Are they minimal elliptic curves of $M$ (resp. minimal
elliptic curves passing through generic point of $M$)?  Do they
exhaust all minimal essential elliptic curves on $M$ (See Conjecture
\ref{conj4.8} for detail)?

In this note, we consider the case that $r=2$ and $d=1$, then $M$ is
a smooth projective fano manifold of dimension $3g-3$. When $C$ is
generic, we show that any essential elliptic curve $\phi:B\to M$ has
degree at least $6$, and it must be an \textbf{elliptic curve of
split type} if it has degree $6$ (See Theorem \ref{thm4.6}). When
$g>4$ and $C$ is generic, we show that any essential elliptic curve
$\phi:B\to M$ passing through the generic point of $M$ have degree
at least $12$ (See Theorem \ref{thm4.7}). When $C$ is generic, there
is no nontrivial morphism from $C$ to an elliptic curve, which
implies that ${\rm Pic}(C\times B)={\rm Pic}(C)\times {\rm Pic}(B)$.
It is the condition that we need through the whole paper.

We give a brief description of the article. In Section 2, we show a
formula of degree for general case. In Section 3, we show how the
general formula implies the known case $B=\Bbb P^1$ and construct
the examples of essential elliptic curves of degree $6(r,d)$ and
$6r$. In Section 4, we prove the main theorems (Theorem \ref{thm4.6}
and Theorem \ref{thm4.7}), which is the special case $r=2$, $d=1$ of
Conjecture \ref{conj4.8}. Although I believe the conjecture, I leave
the case of $r>2$ to other occasion.

\section{The degree formula of curves in moduli spaces}

Let $C$ be a smooth projective curve of genus $g\ge 2$ and $\sL$ a
line bundle on $C$ of degree $d$. Let $M=\sS\sU_C(r,\sL)^s$ be the
moduli spaces of stable bundles on $C$ of rank $r$, with fixed
determinant $\sL$. It is well-known that $Pic(M)=\Bbb Z\cdot
\Theta$, where $\Theta$ is an ample divisor.

\begin{lem}\label{lem2.1} For any smooth projective curve $B$ of
genus $b$, if $$\phi:B\to M$$ is defined by a vector bundle $E$ on
$C\times B$, then
$${\rm deg}\phi^*(-K_M)=c_2(\sE nd^0(E))=2rc_2(E)-(r-1)c_1(E)^2:=\Delta(E)$$
\end{lem}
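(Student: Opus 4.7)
The plan is to identify $\phi^*K_M$ with the determinant of a derived push-forward on $B$ and then evaluate it by Grothendieck--Riemann--Roch on the surface $C\times B$. First, I would set up the identification via deformation theory: at a stable bundle $[F]\in M$, simplicity gives $H^0(C,\sE nd^0 F)=0$, while Kodaira--Spencer gives $H^1(C,\sE nd^0 F)\cong T_{[F]}M$. Applied to the family $E$ on $C\times B$ defining $\phi$, this shows $Rp_{B*}(\sE nd^0 E)$ is quasi-isomorphic to $\phi^*T_M$ placed in cohomological degree one. In $K$-theory we therefore have $[Rp_{B*}(\sE nd^0 E)]=-[\phi^*T_M]$, and taking determinants yields $\det Rp_{B*}(\sE nd^0 E)\cong\phi^*K_M$. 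The key point justifying this globally is that $\sE nd^0 E$ is invariant under twisting $E$ by a line bundle pulled back from $B$, so the construction depends only on $\phi$ and goes through whether or not a universal family exists on $C\times M$.

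Next, I would apply Grothendieck--Riemann--Roch to $p_B:C\times B\to B$. The relative tangent bundle is $p_C^*T_C$, so the relative Todd class is $1+(1-g)\,p_C^*[\mathrm{pt}]$, and since $\sE nd^0 E$ is traceless, $c_1(\sE nd^0 E)=0$ and
$$\mathrm{ch}(\sE nd^0 E)=(r^2-1)-c_2(\sE nd^0 E)+\cdots.$$
The only codimension-two summand that contributes to $c_1(Rp_{B*}(\sE nd^0 E))$ is $-c_2(\sE nd^0 E)$, since the $(r^2-1)(1-g)\,p_C^*[\mathrm{pt}]$ piece pushes forward to a degree-zero class on $B$ and thus only affects the rank. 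Hence
$$c_1\bigl(\det Rp_{B*}(\sE nd^0 E)\bigr)=-p_{B*}\bigl(c_2(\sE nd^0 E)\bigr),$$
of degree $-c_2(\sE nd^0 E)\cdot[C\times B]$ on $B$. Combining with the identification above yields $\deg\phi^*(-K_M)=c_2(\sE nd^0 E)$.

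Finally, the second equality $c_2(\sE nd^0 E)=2rc_2(E)-(r-1)c_1(E)^2$ is a routine Chern-character computation from $\mathrm{ch}(E\otimes E^\vee)=\mathrm{ch}(E)\cdot\mathrm{ch}(E^\vee)$, together with the exact sequence $0\to\sE nd^0 E\to\sE nd E\to\sO\to 0$, which leaves $c_2$ unchanged. The conceptual step requiring real care is the first one: pinning down the global identification $\phi^*K_M\cong\det Rp_{B*}(\sE nd^0 E)$ rather than just its fiberwise version. The twist-invariance of $\sE nd^0$ absorbs the possible absence of a Poincar\'e bundle on $C\times M$, and what remains is a direct GRR calculation.
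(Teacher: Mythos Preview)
Your proof is correct and follows essentially the same approach as the paper: identify $\phi^*T_M$ with $R^1\pi_*(\sE nd^0 E)$ using stability/simplicity to kill $R^0$, then compute the degree by Riemann--Roch. The only cosmetic difference is that the paper splits the computation into Riemann--Roch on $B$ (to pass from $\deg\phi^*(-K_M)$ to $\chi(R^1\pi_*\Bbb E)$) followed by Hirzebruch--Riemann--Roch on $X$ via the Leray spectral sequence, whereas you do both at once with GRR along $p_B$; and the paper invokes the global sheaf $\sE nd^0$ on $C\times M$ and pulls it back, while you work directly on $C\times B$ and appeal to twist-invariance---both handle the possible absence of a universal bundle equally well.
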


\begin{proof}
In general, there is no universal bundle on $C\times M$, but there
exist vector bundle $\sE nd^0$ and projective bundle $\sP$ on
$C\times M$ such that $\sE nd^0|_{C\times\{[V]\}}=\sE nd^0(V)$ and
$\sP|_{C\times\{[V]\}}=\Bbb P(V)$ for any $[V]\in M$. Let
$\pi:C\times M\to M$ be the projection, then $T_M=R^1\pi_*(\sE
nd^0)$, which commutes with base changes since $\pi_*(\sE nd^0)=0$.

For any curve $\phi:B\to M$, let $X:=C\times B$, $\Bbb
E=(id\times\phi)^*\sE nd^0$ and $\pi:X=C\times B\to B$ still denote
the projection. Then $\phi^*T_M=R^1\pi_*\Bbb E$. By Riemann-Roch
theorem, we have
$${\rm deg}\phi^*(-K_M)=\chi(R^1\pi_*\Bbb E)+(r^2-1)(g-1)(b-1).$$
By using Leray spectral sequence and $\chi(\Bbb E)={\rm deg}(ch(\Bbb
E)\cdot td(T_X))_2$, we have $\chi(R^1\pi_*\Bbb E)=-\chi(\Bbb
E)=c_2(\Bbb E)-(r^2-1)(g-1)(b-1)$, hence
$${\rm deg}\phi^*(-K_M)=c_2(\Bbb E).$$
If $\phi: B\to M$ is defined by a vector bundle $E$ on $X=C\times
B$, then $\Bbb E=\sE nd^0(E)$ (cf. the proof of lemma 2.1 in
\cite{Sun}). Thus
$${\rm deg}\phi^*(-K_M)=c_2(\sE nd^0(E))=2rc_2(E)-(r-1)c_1(E)^2.$$
\end{proof}

Let $f: X\to C$ be the projection. Then, for any vector bundle $E$
on $X$, there is a relative Harder-Narasimhan filtration (cf Theorem
2.3.2, page 45 in \cite{HL})
$$0=E_0\subset E_1\subset \,\cdots\,\subset E_n=E$$
such that $F_i=E_i/E_{i-1}$ ($i=1,\,...\, n$) are flat over $C$ and
its restriction to general fiber $X_p=f^{-1}(p)$ is the
Harder-Narasimhan filtration of $E|_{X_p}$. Thus $F_i$ are
semi-stable of slop $\mu_i$ at generic fiber of $f:X\to B$ with
$\mu_1>\mu_2>\,\cdots\,>\mu_n.$ Then we have the following theorem

\begin{thm}\label{thm2.2} For any vector bundle $E$ of rank $r$ on $X$,
let $$0=E_0\subset E_1\subset \,\cdots\,\subset E_n=E$$ be the
relative Harder-Narasimhan filtration over $C$ with
$F_i=E_i/E_{i-1}$ and $\mu_i=\mu(F_i|_{f^{-1}(x)})$ for generic
$x\in C$. Let $\mu(E)$ and $\mu(E_i)$ denote the slop of
$E|_{\pi^{-1}(b)}$ and $E_i|_{\pi^{-1}(b)}$ for generic $b\in B$.
Then, if $${\rm Pic}(C\times B)={\rm Pic}(C)\times {\rm Pic}(B),$$
we have the following formula
\ga{2.1}{\Delta(E)=2r\left(\aligned&\sum_{i=1}^n\left(c_2(F_i)-\frac{{\rm
rk}(F_i)-1}{2\,{\rm rk}(F_i)}c_1(F_i)^2 \right)\\
&+\sum_{i=1}^{n-1}(\mu(E)-\mu(E_i)){\rm
rk}(E_i)(\mu_i-\mu_{i+1})\endaligned\right).}
\end{thm}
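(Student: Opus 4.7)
The plan is to expand $\Delta(E)=2rc_2(E)-(r-1)c_1(E)^2$ using the filtration via the Whitney product formula, and then reorganize the resulting first-Chern-class terms by invoking the product structure of $\Pic(X)$.

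By induction on $n$ applied to the short exact sequences $0\to E_{i-1}\to E_i\to F_i\to 0$, one obtains on the surface $X=C\times B$ the identities $c_1(E)=\sum_i c_1(F_i)$ and $c_2(E)=\sum_i c_2(F_i)+\sum_{i<j}c_1(F_i)c_1(F_j)$. Substituting these into $\Delta(E)$ and rearranging yields
\[
\Delta(E)=2r\sum_i c_2(F_i)+c_1(E)^2-r\sum_i c_1(F_i)^2.
\]
Adding and subtracting $r\sum_i\frac{r_i-1}{r_i}c_1(F_i)^2$, with $r_i={\rm rk}(F_i)$, converts part of this into $2r\sum_i\bigl(c_2(F_i)-\frac{r_i-1}{2r_i}c_1(F_i)^2\bigr)$, matching the first sum in (2.1). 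The proof then reduces to the identity
\[
c_1(E)^2-r\sum_i\frac{c_1(F_i)^2}{r_i}=2r\sum_{i=1}^{n-1}(\mu(E)-\mu(E_i))R_i(\mu_i-\mu_{i+1}),
\]
where $R_i={\rm rk}(E_i)$.

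The hypothesis $\Pic(X)=\Pic(C)\times\Pic(B)$ now permits the decomposition $c_1(F_i)=a_if_B+b_if_C$, where $f_B$ and $f_C$ are the classes of fibers of $f$ and $\pi$ (so $f_B^2=f_C^2=0$ and $f_B\cdot f_C=1$). Restricting to fibers, I read off $\mu_i=b_i/r_i$, $\mu(E_i)=A_i/R_i$ with $A_i=\sum_{j\le i}a_j$, and $\mu(E)=A/r$ where $A=A_n$. Writing $\beta=\sum_j b_j$, one has $c_1(F_i)^2=2a_ib_i$ and $c_1(E)^2=2A\beta$, so the left side of the displayed identity equals $2A\beta-2r\sum_i a_i\mu_i$. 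The right side I evaluate by Abel summation on the telescoping differences $\mu_i-\mu_{i+1}$: setting $R_0=A_0=0$ yields
\[
\sum_{i=1}^{n-1}R_i(\mu_i-\mu_{i+1})=\beta-r\mu_n,\quad \sum_{i=1}^{n-1}A_i(\mu_i-\mu_{i+1})=\sum_i a_i\mu_i-A\mu_n,
\]
and substituting $\mu(E)-\mu(E_i)=(R_iA-rA_i)/(rR_i)$ collapses the right side to $2A\beta-2r\sum_i a_i\mu_i$, matching the left.

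The main obstacle is purely notational: one must keep the two notions of slope (on fibers of $f$ versus on fibers of $\pi$) carefully distinct and track the bidegree pairs $(a_i,b_i)$ through the Abel summation. The Picard hypothesis is what makes the mixed terms $c_1(F_i)c_1(F_j)=a_ib_j+a_jb_i$ clean enough to collapse into the stated form; without it, the intersection numbers on $X$ would not separate this way, and the identity would fail in general.
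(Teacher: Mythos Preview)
Your proposal is correct and follows essentially the same route as the paper: expand $\Delta(E)$ via Whitney to reach $2r\sum c_2(F_i)+c_1(E)^2-r\sum c_1(F_i)^2$, use the Picard product hypothesis to write $c_1(F_i)$ as a sum of fiber classes, then rearrange by Abel summation. The only cosmetic difference is that the paper substitutes the bidegrees $(d_i,r_i\mu_i)$ before peeling off the $\frac{r_i-1}{2r_i}c_1(F_i)^2$ terms (recognizing them afterward as $(r_i-1)d_i\mu_i$), whereas you separate those terms first and substitute after; the underlying computation is identical.
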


\begin{proof} It is easy to see that
$$\aligned 2c_2(E)&=2\sum_{i=1}^nc_2(F_i)+2\sum^n_{i=1}c_1(E_{i-1})c_1(F_i)\\
&=2\sum_{i=1}^nc_2(F_i)+c_1(E)^2-\sum_{i=1}^nc_1(F_i)^2.\endaligned$$
Thus
$$\Delta(E)=
2r\sum_{i=1}^nc_2(F_i)+c_1(E)^2-r\sum_{i=1}^nc_1(F_i)^2.$$

Let $r_i$ be the rank of $F_i$ and $d_i$ be the degree of $F_i$ on
the generic fiber of $\pi:C\times B\to B$. Then we can write
$$c_1(F_i)=f^*\sO_C(d_i) + \pi^*\sO_B(r_i\mu_i)$$
where $\sO_C(d_i)$ (resp. $\sO_B(r_i\mu_i)$) denotes a divisor of
degree $d_i$ (resp. degree $r_i\mu_i$) of $C$ (resp. $B$). Note that
$$c_1(F_i)^2=2d_ir_i\mu_i,\qquad c_1(E)^2=2d\sum^n_{i=1}r_i\mu_i$$
we have
$$\aligned&\Delta(E)=2r\left(\sum_{i=1}^nc_2(F_i)+\mu(E)\sum_{i=1}^nr_i\mu_i
-\sum^n_{i=1}d_ir_i\mu_i\right)\\
&=2r\left(\sum_{i=1}^n(c_2(F_i)-(r_i-1)d_i\mu_i)+\mu(E)\sum_{i=1}^nr_i\mu_i
-\sum^n_{i=1}d_i\mu_i\right).\endaligned$$ Let ${\rm deg}(E_i)$
denote the degree of $E_i$ on the generic fiber of $$\pi:C\times
B\to B.$$ Using $d_i={\rm deg}(E_i)-{\rm deg}(E_{i-1})$ and
$r_i={\rm rk}(E_i)-{\rm rk}(E_{i-1})$, we have
$$\mu(E)\sum_{i=1}^nr_i\mu_i
-\sum^n_{i=1}d_i\mu_i=\sum^{n-1}_{i=1}(\mu(E)-\mu(E_i)){\rm
rk}(E_i)(\mu_i-\mu_{i+1}).$$ Since $d_i\mu_i=c_1(F_i)^2/2r_i$, we
get the formula
$$\Delta(E)=2r\left(\aligned&\sum_{i=1}^n\left(c_2(F_i)-\frac{r_i-1}{2r_i}c_1(F_i)^2\right)\\
&+\sum_{i=1}^{n-1}(\mu(E)-\mu(E_i)){\rm
rk}(E_i)(\mu_i-\mu_{i+1})\endaligned\right).$$

\end{proof}

\begin{rmk}\label{rmk2.3} I do not know if the formula holds without
the assumption that ${\rm Pic}(C\times B)={\rm Pic}(C)\times {\rm
Pic}(B)$. On the other hand, the assumption holds when $B$ is an
elliptic curve and $C$ is generic.
\end{rmk}

\begin{thm}\label{thm2.4} For any torsion free sheaf $\sF$ on $X=C\times
B$, if its restriction to a fiber of $f: X=C\times B\to C$ is
semi-stable, then
$$\Delta(\sF)=2\,{\rm rk}(\sF)\,c_2(\sF)-({\rm
rk}(\sF)-1)c_1(\sF)^2\ge 0.$$ If the determinants $\{{\rm
det}(\sF^{**})_x\}_{x\in C}$ are isomorphic each other, then
$\Delta(\sF)=0$ if and only if $\sF$ is locally free and satisfies
\begin{itemize}
\item All the bundles $\{\sF_x:=\sF|_{\{x\}\times B}\}_{x\in C}$ are semi-stable and
$s$-equivalent each other.
\item All the bundles $\{\sF_y:=\sF|_{C\times\{y\}}\}_{y\in B}$ are isomorphic each
other.
\end{itemize}
\end{thm}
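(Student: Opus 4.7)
On the smooth surface $X = C\times B$, the double dual $\sF^{**}$ is locally free and the quotient $Q = \sF^{**}/\sF$ is $0$-dimensional of some length $\ell$.  Standard Chern-class manipulations give $c_1(\sF^{**}) = c_1(\sF)$ and $c_2(\sF^{**}) = c_2(\sF) - \ell$, so $\Delta(\sF) = \Delta(\sF^{**}) + 2r\ell \geq \Delta(\sF^{**})$.  Moreover $\sF^{**}$ satisfies the same fiberwise semistability hypothesis, since $\sF|_{\{x\}\times B} = \sF^{**}|_{\{x\}\times B}$ for $x$ outside the finite image of $\mathrm{Supp}(Q)$ in $C$.  It therefore suffices to prove the inequality for locally free $\sF$, and equality forces $\ell = 0$.

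\textbf{Step 2 (The inequality, by induction on $r$).}  For $r=1$, $\Delta \equiv 0$.  For $r \geq 2$, the plan is to apply Theorem~\ref{thm2.2} with the roles of $f$ and $\pi$ swapped --- permissible because the Picard hypothesis is symmetric in $C$ and $B$ --- to the relative HN filtration $0 \subset \sF_1 \subset \cdots \subset \sF_n = \sF$ along $\pi : X \to B$, whose successive quotients $G_j$ are semistable of slope $\nu_1 > \cdots > \nu_n$ on a generic $\pi$-fiber.  Writing $\mu_B(\cdot)$ for slope on a generic $f$-fiber, the formula reads
\begin{equation*}
\Delta(\sF) = 2r\sum_{j=1}^{n}\Bigl(c_2(G_j) - \tfrac{r_j-1}{2r_j}c_1(G_j)^2\Bigr) + 2r\sum_{j=1}^{n-1}\bigl(\mu_B(\sF) - \mu_B(\sF_j)\bigr)\rank(\sF_j)(\nu_j - \nu_{j+1}).
\end{equation*}
The second sum is $\geq 0$, since semistability on $f$-fibers gives $\mu_B(\sF_j) \leq \mu_B(\sF)$ and the HN property gives $\nu_j > \nu_{j+1}$.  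When $n\geq 2$ each $G_j$ has rank $<r$ and is fiberwise semistable for $\pi$, so $\Delta(G_j)\geq 0$ by the inductive hypothesis applied to the swapped statement.  When $n=1$, $\sF$ is semistable on fibers of both projections; for the ample polarization $H_\epsilon = f^*H_C + \epsilon\pi^*H_B$ with any $\epsilon > 0$, the Picard decomposition gives
\begin{equation*}
\mu_{H_\epsilon}(\sF) - \mu_{H_\epsilon}(\sF') = \deg H_C\,(\mu_B(\sF) - \mu_B(\sF')) + \epsilon\deg H_B\,(\mu_C(\sF) - \mu_C(\sF')) \geq 0
\end{equation*}
for every saturated $\sF' \subset \sF$, making $\sF$ $\mu_{H_\epsilon}$-semistable, so the classical Bogomolov inequality on the surface $X$ yields $\Delta(\sF) \geq 0$.

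\textbf{Step 3 (Equality case and the main obstacle).}  Assume $\Delta(\sF) = 0$ with $\{\det\sF^{**}_x\}_{x\in C}$ pairwise isomorphic.  Step 1 forces $\sF$ locally free, and Step 2 forces both sums to vanish termwise.  Vanishing of the second sum combined with $\nu_j > \nu_{j+1}$ forces $\mu_B(\sF_j) = \mu_B(\sF)$ for every $j$, so on a generic $f$-fiber the HN filtration restricts to a Jordan--H\"older filtration of the semistable $\sF_x$; this shows all $\sF_x$ are semistable and share the common $S$-equivalence class $\bigoplus_j G_j|_{\{x\}\times B}$.  Vanishing of the first sum gives $\Delta(G_j) = 0$ for each $j$; the inductive hypothesis (in the $n=1$ bi-semistable case, the equality clause of Bogomolov combined with Narasimhan--Seshadri on the two factors) identifies each $G_j$ with $f^*L_j \otimes \pi^*V_j$ via the Picard decomposition, and the constant-determinant hypothesis fixes the $L_j$ globally on $C$, forcing $\sF|_{C\times\{y\}}$ to be isomorphic to a fixed bundle on $C$ for every $y \in B$.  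The \emph{main obstacle} is exactly this final step: translating the two numerical vanishings into explicit product-structure statements, and in particular upgrading "$S$-equivalent on $B$-fibers" to honest isomorphism of all $\sF_y$ for $y \in B$; the constant-determinant hypothesis together with $\mathrm{Pic}(X) = \mathrm{Pic}(C)\times\mathrm{Pic}(B)$ is precisely what makes this upgrade possible.
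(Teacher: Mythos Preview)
Your route is genuinely different from the paper's. The paper never touches the relative Harder--Narasimhan filtration or Bogomolov's inequality; instead it uses Faltings' theorem to produce a vector bundle $V$ on $B$ with $H^i(\sE nd^0(\sF_x)\otimes V)=0$ for some $x$, observes that the theta line bundle $\Theta(\sE nd^0(\sF)\otimes\pi^*V)$ on $C$ has degree ${\rm rk}(V)\cdot\Delta(\sF)$ and a nonzero section $\vartheta(V)$, and concludes $\Delta(\sF)\ge 0$. For the equality case it argues that $\vartheta(V)$ is then nowhere zero, forcing $H^i(\sE nd^0(\sF_x)\otimes V)=0$ for \emph{every} $x\in C$, hence every $\sF_x$ is semistable; the constant-determinant hypothesis then pins down the $s$-equivalence class via the induced map to the moduli space $\sU_B$. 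Finally, the isomorphism of all $\sF_y$ is obtained by an explicit Quot-scheme trick (following Hein and Faltings): one thickens $V$ to $V'$ by an elementary transformation at $y_1$, moves inside the Quot scheme of degree-one quotients of $V'$, and shows $\sF_y\cong f_*(\sF\otimes\pi^*V')$ is independent of $y$.

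Your argument has two real gaps. First, Theorem~\ref{thm2.2} carries the hypothesis ${\rm Pic}(C\times B)={\rm Pic}(C)\times{\rm Pic}(B)$, but Theorem~\ref{thm2.4} does not; the paper's Faltings argument needs no such assumption, so by invoking Theorem~\ref{thm2.2} you are proving a strictly weaker statement. Second, Step~3 is incomplete in ways you partly acknowledge but also partly understate. Your filtration argument gives semistability of $\sF_x$ only for \emph{generic} $x$, whereas the conclusion requires it for all $x$; the paper's theta section, being nowhere zero, handles every fiber at once. The assertion that $\Delta(G_j)=0$ forces $G_j\cong f^*L_j\otimes\pi^*V_j$ via ``the equality clause of Bogomolov combined with Narasimhan--Seshadri'' is not justified: equality in Bogomolov gives projective flatness of the Jordan--H\"older factors, but a projectively flat bundle on $C\times B$ need not split as a tensor product, and you have not verified the constant-determinant hypothesis needed to feed $G_j$ back into the induction. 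And even granting that product structure, knowing the graded pieces of $\sF_y$ does not make all $\sF_y$ isomorphic, since the extension classes could vary with $y$; this is precisely what the paper's Quot-scheme construction is designed to overcome, and your sketch does not supply a substitute.
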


\begin{proof} Since $\Delta(\sF)\ge \Delta(\sF^{**})$, we can assume
that $\sF$ is a vector bundle.  There is a $x\in C$ such that
$\sF_x=\sF|_{\{x\}\times B}$ is semi-stable, so is $\sE
nd^0(\sF)_x=\sE nd^0(\sF_x)$. Thus, by a theorem of Faltings (cf.
Theorem I.2. of \cite{Fa}), there is a vector bundle $V$ on $B$ such
that
$${\rm H}^0(\sE nd^0(\sF)_x\otimes V)={\rm H}^1(\sE nd^0(\sF)_x\otimes
V)=0,$$ which defines a global section $\vartheta(V)$ of the line
bundle
$$\Theta(\sE nd^0(\sF)\otimes\pi^*V)=({\rm det}f_!(\sE
nd^0(\sF)\otimes\pi^*V))^{-1}$$ such that $\vartheta(V)(x)\neq 0$.
By Grothendieck-Riemann-Roch theorem,
$$\aligned c_1({\rm det}f_!(\sE
nd^0(\sF)\otimes\pi^*V))&=f_*({\rm ch}(\sE
nd^0(\sF)\otimes\pi^*V){\rm td}(\pi^*T_B))_2\\&=-c_2(\sE
nd^0(\sF)\otimes\pi^*V)\endaligned$$ which means that the line
bundle $\Theta(\sE nd^0(\sF)\otimes\pi^*V)$ has degree $$c_2(\sE
nd^0(\sF)\otimes\pi^*V)={\rm rk}(V)\cdot c_2(\sE nd^0(\sF))={\rm
rk}(V)\cdot\Delta(\sF)$$ with a nonzero global section
$\vartheta(V)$. Thus $\Delta(\sF)\ge 0.$

If $\Delta(\sF)=0$, then $\sF=\sF^{**}$ must be locally free and
$\vartheta(V)(x)\neq 0$ for any $x\in C$, which means that for any
$x\in C$, we have
$${\rm H}^0(\sE nd^0(\sF)_x\otimes V)={\rm H}^1(\sE nd^0(\sF)_x\otimes
V)=0.$$ Then, by the theorem of Faltings, the bundles
$$\{\,\sE nd^0(\sF)_x\,\}_{x\in C}$$
are all semi-stable. Thus, for any $x\in C$, the bundle
$\sF_x:=\sF|_{\{x\}\times B}$ is semi-stable. The bundle $\sF$
defines a morphism $\phi_{\sF}: C\to \sU_B$ from $C$ to the moduli
space $\sU_B$ of semi-stable bundles on $B$, the line bundle
$\Theta(\sE nd^0(\sF)\otimes\pi^*V)$ clearly descends to a line
bundle on $\sU_B$. If the determinants ${\rm det}(\sF_x)$ ($x\in C)$
are fixed, then $${\rm deg}(\Theta(\sE nd^0(\sF)\otimes\pi^*V))=0$$
means that all $\{\sF_x\}_{x\in C}$ are $s$-equivalence.

By using a technique of \cite{He} (see Step 5 in the proof of
Theorem 4.2 in \cite{He}, see also the proof of Theorem I.4 in
\cite{Fa}), we will show
$$\sF|_{C\times\{y_1\}}\cong \sF|_{C\times\{y_2\}},\quad
\forall\,\,y_1,\,y_2\,\in B.$$ Choose a nontrivial extension $0\to
V\to V' \xrightarrow{q_1} \sO_{y_1}\to 0$ on $B$, let $\mathfrak{Q}$
be the Quot-scheme of rank $0$ and degree $1$ quotients of $V'$, and
$$0\to \mathcal{K}\to p_B^*V'\to \mathfrak{T}\to 0$$
be the tautological exact sequence on $B\times\mathfrak{Q}$. Fix a
point $x_1\in C$, then the set $q\in \mathfrak{Q}$ such that ${\rm
H}^0(\sF_{x_1}\otimes\mathcal{K}_q)={\rm
H}^1(\sF_{x_1}\otimes\mathcal{K}_q)=0$ is an open set $U\subset
\mathfrak{Q}$ and $U\neq \emptyset$ since $q_1=(0\to V\to
V'\xrightarrow{q_1} \sO_{y_1}\to 0)\in U$.

Let $\Gamma\subset B\times \mathbb{P}(V')$ be the graph of
$\mathbb{P}(V')\xrightarrow{p} B$, then
$$p_B^*V'\to p_B^*V'|_{\Gamma}=p^*V'\to \sO(1)\to 0$$
induces a quotient $p_B^*V'\to \,_{\Gamma}\sO(1)\to 0$ on $B\times
\mathbb{P}(V')$, which defines a morphism $\mathbb{P}(V')\to
\mathfrak{Q}$. It is easy to see that $\mathbb{P}(V')\to
\mathfrak{Q}$ is surjective (in fact, it is a isomorphism). Thus
there is an open $B_1\subset B$ with $y_1\in B_1$ such that for any
$y\in B_1$ there exists an exact sequence \ga{2.2}{0\to \sK_q\to
V'\xrightarrow{q} \sO_y\to 0} such that ${\rm
H}^0(\sF_{x_1}\otimes\mathcal{K}_q)={\rm
H}^1(\sF_{x_1}\otimes\mathcal{K}_q)=0,$ which implies
$${\rm H}^0(\sF_x\otimes\mathcal{K}_q)={\rm
H}^1(\sF_x\otimes\mathcal{K}_q)=0\qquad \forall\,\, x\in C$$ since
$\sF_x$ is $s$-equivalent to $\sF_{x_1}$ for any $x\in C$. Pull back
the exact sequence \eqref{2.2} by $\pi: C\times B\to B$ and tensor
with $\sF$, we have the exact sequence \ga{2.3}{0\to
\sF\otimes\pi^*\sK_q\to \sF\otimes\pi^*V'\to \sF_y\to 0.} Take
direct images of \eqref{2.2} under $f:C\times B\to C$, we have
$$\sF_y\cong f_*(\sF\otimes\pi^*V')\,,\quad \forall\,\,y\in B_1$$
which implies that all $\{\sF_y\}_{y\in B}$ are isomorphic each
other.
\end{proof}

We will need the following lemma in the later computation, whose
proof are straightforward computations (see \cite{F} for the case of
rank $1$).

\begin{lem}\label{lem2.5} Let $X$ be a smooth projective surface and
$j:D\hookrightarrow X$ be an effective divisor. Then, for any vector
bundle $V$ on $D$, we have
$$\aligned&c_1(j_*V)={\rm rk}(V)\cdot D\\&c_2(j_*V)=\frac{{\rm
rk}(V)({\rm rk}(V)+1)}{2}D^2-j_*c_1(V).\endaligned$$

\end{lem}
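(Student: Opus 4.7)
The plan is to prove the formulas first for line bundles by applying Grothendieck--Riemann--Roch to the closed immersion $j:D\hookrightarrow X$, and then to bootstrap to arbitrary rank via Whitney's formula and a line-bundle filtration of $V$.

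First I would handle the rank-one case $V=L$. GRR gives
$$\text{ch}(j_*L) = j_*\bigl(\text{ch}(L)\cdot \text{td}(D)\bigr)\cdot \text{td}(X)^{-1},$$
which I would expand modulo $A^{\ge 3}(X)$. The adjunction identity $c_1(T_D)=j^*c_1(T_X)-j^*D$ combined with the projection formula $j_*(j^*\alpha)=\alpha\cdot D$ cancels the Todd-class contributions, leaving $\text{ch}_1(j_*L)=D$ and $\text{ch}_2(j_*L)=j_*c_1(L)-\tfrac{1}{2}D^2$. Converting to Chern classes via $c_1=\text{ch}_1$ and $c_2=\tfrac{1}{2}c_1^2-\text{ch}_2$ then yields $c_1(j_*L)=D$ and $c_2(j_*L)=D^2-j_*c_1(L)$, which is the $r=1$ case of the lemma.

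For the induction on $r=\text{rk}(V)$, I would exploit that $D$ is a projective one-dimensional scheme, so any rank-$r$ vector bundle $V$ on $D$ admits a line subbundle (take a general section of a sufficiently positive twist), giving a short exact sequence $0\to L\to V\to V'\to 0$ on $D$ with $\text{rk}(V')=r-1$. Because $j$ is a closed immersion, $j_*$ is exact, and Whitney's formula gives $c(j_*V) = c(j_*L)\cdot c(j_*V')$. Multiplying out the rank-one formula with the inductively known rank-$(r-1)$ formula, using $c_1(V)=c_1(L)+c_1(V')$ and the elementary identity $1+\tfrac{(r-1)r}{2}+(r-1)=\tfrac{r(r+1)}{2}$, the coefficients of $D$ and $D^2$ assemble into exactly the claimed expressions for $c_1(j_*V)$ and $c_2(j_*V)$.

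The main obstacle is essentially the bookkeeping between Chern classes and the Chern character in low degrees, together with verifying that GRR and adjunction apply to the possibly singular or non-reduced divisor $D$. Since $j:D\hookrightarrow X$ is a regular embedding of codimension one into the smooth surface $X$, both apply as written (see Fulton's \emph{Intersection Theory}, to which the lemma already refers for the rank-one case), so no deeper input is required.
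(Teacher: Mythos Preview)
Your argument is correct. The paper does not actually give a proof of this lemma: it merely says the proof is a ``straightforward computation'' and cites \cite{F} for the rank-one case, so there is nothing substantive to compare against. Your GRR computation for rank one and Whitney-formula induction fill in exactly what the paper leaves to the reader.

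Two small remarks. First, the reference \cite{F} in the paper is Friedman's \emph{Algebraic Surfaces and Holomorphic Vector Bundles}, not Fulton's \emph{Intersection Theory}; your closing sentence misidentifies it. Second, since you already invoke GRR, you could apply it directly in arbitrary rank rather than inducting: writing GRR for the regular embedding $j$ in terms of the normal bundle $N_{D/X}=j^*\sO_X(D)$ gives
\[
\mathrm{ch}(j_*V)=j_*\bigl(\mathrm{ch}(V)\cdot\mathrm{td}(N_{D/X})^{-1}\bigr)=rD+j_*c_1(V)-\tfrac{r}{2}D^2,
\]
from which both formulas follow at once. This formulation also sidesteps any worry about making sense of $T_D$ when $D$ is singular or non-reduced, and avoids the (valid, but slightly delicate) step of producing a global line subbundle on an arbitrary one-dimensional projective scheme.
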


Recall that $X_t=f^{-1}(t)$ denotes the fiber of $f:X\to C$ and for
any vector bundle $\sF$ on $X$, $\sF_t$ denote the restrictions of
$\sF$ to $X_t$.

\begin{lem}\label{lem2.6}
Let $\sF_t\to W\to 0$ be a locally free quotient and
$$0\to\sF'\to \sF\to \, _{X_t}W\to 0$$
be the elementary transformation of $\sF$ along $W$ at $X_t\subset
X$. Then
$$\Delta(\sF)=\Delta(\sF')+2r(\mu(\sF_t)-\mu(W)){\rm rk}(W).$$
\end{lem}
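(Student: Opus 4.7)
The plan is to reduce the statement to a direct Chern-class computation using the additivity of $c_1$ and $c_2$ in the short exact sequence and the explicit formulas of Lemma \ref{lem2.5}. Set $j: X_t \hookrightarrow X$, and observe the crucial fact that $X_t$ is a fiber of the projection $f: C\times B \to C$, so $X_t$ moves in a base-point-free pencil and in particular
\[
X_t \cdot X_t = 0.
\]
This will collapse many of the terms produced by Lemma \ref{lem2.5}.

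First I would apply Lemma \ref{lem2.5} to $j_*W$ to obtain
\[
c_1(j_*W) = {\rm rk}(W)\cdot X_t,\qquad c_2(j_*W) = -j_*c_1(W),
\]
where the quadratic term drops out because $X_t^2=0$; here $j_*c_1(W)$ is a zero-cycle on $X$ of degree $\deg(W)$. Next, from the short exact sequence $0\to \sF' \to \sF \to j_*W \to 0$ and the Whitney formula, I obtain
\[
c_1(\sF) = c_1(\sF') + {\rm rk}(W)\cdot X_t,
\]
\[
c_2(\sF) = c_2(\sF') + c_1(\sF')\cdot c_1(j_*W) + c_2(j_*W).
\]
Using $X_t^2 = 0$ again, the intersection $c_1(\sF')\cdot X_t$ equals $c_1(\sF)\cdot X_t = \deg(\sF_t)$, and squaring the first equation gives $c_1(\sF)^2 = c_1(\sF')^2 + 2{\rm rk}(W)\deg(\sF_t)$.

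The final step is to substitute these expressions into $\Delta(\sF) = 2r c_2(\sF) - (r-1)c_1(\sF)^2$ and simplify. The contributions involving $\deg(\sF_t)$ combine as
\[
2r\,{\rm rk}(W)\deg(\sF_t) - 2(r-1)\,{\rm rk}(W)\deg(\sF_t) = 2\,{\rm rk}(W)\deg(\sF_t),
\]
which, using $\mu(\sF_t) = \deg(\sF_t)/r$ (since $\sF$ has rank $r$), rewrites as $2r\,{\rm rk}(W)\mu(\sF_t)$. Together with the $-2r\deg(W) = -2r\,{\rm rk}(W)\mu(W)$ coming from $c_2(j_*W)$, this yields precisely $2r(\mu(\sF_t)-\mu(W)){\rm rk}(W)$, leaving $\Delta(\sF')$ as the remaining term. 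There is no real obstacle here beyond careful bookkeeping; the entire argument rests on the two inputs $X_t^2=0$ and Lemma \ref{lem2.5}, so the main thing to be vigilant about is identifying $j_*c_1(W)$ with the integer $\deg(W)$ when passing to degrees.
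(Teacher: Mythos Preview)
Your proof is correct and follows exactly the approach the paper intends: the paper states Lemma~\ref{lem2.6} without proof, immediately after Lemma~\ref{lem2.5}, implicitly leaving it as a direct Chern-class computation using Lemma~\ref{lem2.5}, the Whitney formula, and $X_t^2=0$. Your bookkeeping is accurate and nothing is missing.
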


\section{Mminimal rational curves and examples of elliptic curves on moduli spaces}

When $B=\Bbb P^1$, the condition ${\rm Pic}(C\times B)={\rm
Pic}(C)\times{\rm Pic}(B)$ always hold and any morphism $B\to M$ is
defined by a vector bundle on $C\times B$ (cf. Lemma 2.1 of
\cite{Sun}).

Recall that given two nonnegative integers $k$, $\ell$, a vector
bundle $W$ of rank $r$ and degree $d$ on $C$ is $(k,\ell)$-stable,
if, for each proper subbundle $W'$ of $W$, we have
$$\frac{{\rm deg}(W')+k}{{\rm rk}(W')}<\frac{{\rm deg}(W)+k-\ell}{r}.$$
The usual stability is equivalent to $(0,0)$-stability. The
$(k,\ell)$-stability is an open condition. The proofs of following
lemmas are easy and elementary (cf. \cite{NR}).

\begin{lem}\label{lem3.1} If $g\ge 3$, $M$ contains $(0,1)$-stable and
$(0,1)$-stable bundles. $M$ contains a $(1,1)$-stable bundle $W$
except $g=3$, $d$, $r$ both even.
\end{lem}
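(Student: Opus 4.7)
The plan is to use that $(k,\ell)$-stability is an open condition on the smooth irreducible moduli space $M$, so it suffices to show the locus of bundles violating $(k,\ell)$-stability has strictly positive codimension. First I would translate the defect of stability into a numerical condition: a stable $V\in M$ fails $(k,\ell)$-stability exactly when it contains a proper subbundle $V'$ of some rank $s$ and degree $e$ satisfying
\[
1\le sd-re\le k(r-s)+s\ell,
\]
the lower bound being ordinary stability of $V$. Since the right-hand side is bounded, only finitely many numerical types $(s,e)$ occur, and the bad locus decomposes as a finite union
\[
M(s,e):=\{V\in M : V \text{ contains a subbundle of rank } s \text{ and degree } e\}.
\]

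Next I would invoke the standard Quot-scheme dimension count (due to Lange, Narasimhan--Ramanan): bundles in $M(s,e)$ arise as extensions $0\to V'\to V\to V''\to 0$ with $\mathrm{rk}(V')=s,\ \deg(V')=e,\ \mathrm{rk}(V'')=r-s,\ \deg(V'')=d-e$, and parameter counting the pair $(V',V'')$ together with $\mathrm{Ext}^1(V'',V')$ modulo isomorphisms, and comparing to $\dim M=(r^2-1)(g-1)$, yields
\[
\mathrm{codim}_M M(s,e)\ \ge\ s(r-s)(g-1)-(sd-re).
\]

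Finally I would substitute the relevant ranges. For $(0,1)$-stability the constraint $sd-re\le s$ gives $\mathrm{codim}\ge s\bigl((r-s)(g-1)-1\bigr)\ge 1$ whenever $g\ge 3$, and the case $(1,0)$ is entirely symmetric via the bound $sd-re\le r-s$. For $(1,1)$-stability the constraint $sd-re\le r$ yields $\mathrm{codim}\ge s(r-s)(g-1)-r$, which is positive for every admissible $(s,e)$ unless $g=3$ and $r,d$ are both even: in that regime, taking $r=2,\ s=1$, the value $sd-re=2$ is attained (with $e=(d-2)/2$) by a subbundle of every $V\in M$, so the bad locus fills $M$ and the estimate genuinely degenerates. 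Using $(k,\ell)$-stable examples then follows by openness.

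The main obstacle is the borderline case for $(1,1)$-stability: one must confirm that the codimension estimate degenerates \emph{precisely} in the excluded parity regime while remaining strictly positive in every other combination of $(g,r,d)$. This is an elementary but somewhat fiddly parity check in $s,\ r-s$, and the divisibility of $d$, together with the nonstandard verification that the codimension bound is in fact sharp on the bad locus when equality is achieved.
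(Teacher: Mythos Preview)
The paper does not actually prove this lemma: it simply remarks that the proofs ``are easy and elementary'' and cites Narasimhan--Ramanan \cite{NR}. Your proposal is exactly the standard dimension-count argument (and is essentially what \cite{NR} does), so in that sense you match the intended proof.

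One inaccuracy is worth flagging. You assert that the bound $s(r-s)(g-1)-r$ fails to be positive precisely when $g=3$ and $r,d$ are both even, and you justify the failure by ``taking $r=2$''. But for $g=3$ the quantity $2s(r-s)-r$ vanishes only when $r=2,\ s=1$; for every $r\ge 3$ and every $1\le s\le r-1$ one has $2s(r-s)>r$, so your codimension estimate is already strictly positive irrespective of the parity of $d$. Thus your argument in fact establishes existence of $(1,1)$-stable bundles for all $g\ge 3$ except the single case $g=3,\ r=2,\ d$ even --- a statement \emph{stronger} than the lemma, whose broader exclusion ``$r,d$ both even'' is just the form in which \cite{NR} records it. This does no harm to the proof of the lemma as stated (you are proving more, not less); just be aware that the degeneracy is narrower than you wrote. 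Your closing observation that the bad locus genuinely fills $M$ when $g=3,\ r=2,\ d$ even (via Nagata's bound on maximal line subbundles of rank-$2$ bundles) is correct but not required, since the lemma makes no claim of non-existence in the excluded range.
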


\begin{lem}\label{lem3.2} Let $0\to V\to W\to \sO_p\to 0$ be an exact
sequence, where $\sO_p$ is the $1$-dimensional skyscraper sheaf at
$p\in C$. If $\,\,W$ is $(k,\ell)$-stable, then $V$ is
$(k,\ell-1)$-stable.
\end{lem}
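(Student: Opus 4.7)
The plan is to exploit the fact that any proper subbundle of $V$ can be extended (via the inclusion $\iota\colon V\hookrightarrow W$ and saturation) to a proper subbundle of $W$ of the same rank and degree no smaller, and then invoke the $(k,\ell)$-stability hypothesis on $W$.

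First I would record the numerical data. From the exact sequence $0\to V\to W\to \sO_p\to 0$ we read off ${\rm rk}(V)={\rm rk}(W)=r$ and ${\rm deg}(V)={\rm deg}(W)-1=d-1$. Consequently the required inequality defining $(k,\ell-1)$-stability of $V$, namely
\[
\frac{{\rm deg}(V')+k}{{\rm rk}(V')}<\frac{{\rm deg}(V)+k-(\ell-1)}{r}=\frac{d+k-\ell}{r},
\]
for every proper subbundle $V'\subset V$, has right-hand side identical to the one in the $(k,\ell)$-stability inequality for $W$.

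Next, given an arbitrary proper subbundle $V'\subset V$ with $s:={\rm rk}(V')<r$, I would let $W'\subset W$ be the saturation of the image $\iota(V')\subset W$. Then $W'$ is again a subbundle of $W$ with ${\rm rk}(W')=s<r$, so $W'$ is a proper subbundle, and by construction $V'\subset W'$ as coherent subsheaves of $W$ with equal rank, which forces ${\rm deg}(W')\ge {\rm deg}(V')$.

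Finally I would chain inequalities: since $W'\subset W$ is a proper subbundle and $W$ is $(k,\ell)$-stable,
\[
\frac{{\rm deg}(V')+k}{{\rm rk}(V')}\le \frac{{\rm deg}(W')+k}{{\rm rk}(W')}<\frac{d+k-\ell}{r},
\]
which is exactly the $(k,\ell-1)$-stability inequality for $V$. There is no real obstacle here; the only point that requires any care is the verification that the saturation $W'$ is a proper subbundle of $W$ (which follows immediately from the rank computation) and that saturating can only increase the degree.
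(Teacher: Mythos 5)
Your proof is correct: the key observations (that $\deg V=\deg W-1$ makes the right-hand sides of the $(k,\ell-1)$- and $(k,\ell)$-stability inequalities coincide, and that saturating $V'\subset V\subset W$ yields a proper subbundle of $W$ of the same rank and no smaller degree) give exactly the standard argument. The paper itself omits the proof, calling it "easy and elementary" and citing Narasimhan--Ramanan, and what you have written is precisely that intended argument.
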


A curve $B\to M$ defined by $E$ on $C\times B$ passes through the
generic point of $M$ implies that $E_y:=E|_{C\times\{y\}}$ is
$(1,1)$-stable for generic $y\in B$. Thus in the formula \eqref{2.1}
of Theorem \ref{thm2.2} we have \ga{3.1}{(\mu(E)-\mu(E_i)){\rm
rk}(E_i)>1.} On the other hand, any semi-stable bundle on $B=\Bbb
P^1$ must have integer slop. By the formula \eqref{2.1} in Theorem
\ref{thm2.2}, we have $$\Delta(E)>2r$$ if $E$ is not semi-stable on
the generic fiber of $f: X=C\times \Bbb P^1\to C$.

When $E$ is semi-stable on the generic fiber of $f: X\to C$, by
tensor $E$ with a line bundle, we can assume that $E$ is trivial on
the generic fiber of $f: X\to C$. Thus $\Delta(E)=2rc_2(E)\ge 2r$
and there must be a fiber $X_t=f^{-1}(t)$ such that $E_t=E|_{X_t}$
is not semi-stable by Theorem \ref{thm2.4}. If $\Delta(E)=2r$, by
Lemma \ref{lem2.6}, we must have ${\rm rk}(W)=1$, $\mu(W)=-1$ and
$\Delta(\sF')=0$ in Lemma \ref{lem2.6}. Thus $\Delta(E)=2r$ if and
only if $E$ satisfies
$$0\to f^*V\to E\to \,_{X_t}\sO_{\Bbb P^1}(-1)\to 0$$
which defines a so called Hecke curve. Therefore we get the main
theorem in \cite{Sun}.

\begin{thm}\label{thm3.3} If $g\ge 3$, then any rational curve of $M$
passing through the generic point of $M$ has at least degree $2r$
with respect to $-K_M$.  It has degree $2r$ if and only if it is a
Hecke curve except $g=3$, $r=2$ and $(2,d)=2$.
\end{thm}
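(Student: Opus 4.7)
The plan is to exploit the special features of $B = \mathbb{P}^1$: the hypothesis ${\rm Pic}(C \times B) = {\rm Pic}(C) \times {\rm Pic}(B)$ is automatic so Theorem \ref{thm2.2} applies, every $\phi : \mathbb{P}^1 \to M$ is defined by a vector bundle $E$ on $C \times \mathbb{P}^1$, and all semi-stable bundles on $\mathbb{P}^1$ have integer slopes. Combined with Lemma \ref{lem3.1} and openness of $(k,\ell)$-stability, the assumption that $\phi$ passes through the generic point of $M$ forces the generic fiber $E_y = E|_{C \times \{y\}}$ to be $(1,1)$-stable, provided we are outside the excluded case $g=3$, $r=2$, $(2,d)=2$.

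I would split into two cases depending on whether $E$ is semi-stable on the generic fiber of $f : X = C \times \mathbb{P}^1 \to C$. In the non-semi-stable case the relative Harder--Narasimhan filtration is nontrivial, and the formula of Theorem \ref{thm2.2} decomposes $\Delta(E)$ into two sums. The first is non-negative by Theorem \ref{thm2.4} applied to each $F_i$ (which is semi-stable on a general fiber of $f$). For the second sum, $(1,1)$-stability of $E_y$ gives $(\mu(E) - \mu(E_i)) \cdot {\rm rk}(E_i) > 1$ when applied to the proper subbundle $E_i|_{\{y\}}$, while integrality of slopes on $\mathbb{P}^1$ forces $\mu_i - \mu_{i+1} \geq 1$. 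Hence each summand of the second sum exceeds $1$, yielding the strict inequality $\Delta(E) > 2r$.

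In the semi-stable case, tensoring $E$ with the pullback of a suitable line bundle from $C$ reduces to $E$ being trivial on the generic fiber of $f$; then $c_1(E)^2 = 0$ and $\Delta(E) = 2r\, c_2(E)$. If $c_2(E) = 0$, Theorem \ref{thm2.4} forces all $E_x$ to be mutually isomorphic and hence $E = \pi^* V$ for some $V$ on $\mathbb{P}^1$, making $\phi$ constant, a contradiction. Thus $c_2(E) \geq 1$ and $\Delta(E) \geq 2r$. For the equality case, Theorem \ref{thm2.4} again produces a fiber $X_t$ on which $E_t$ is not semi-stable; letting $E_t \to W \to 0$ be the maximal destabilizing (semi-stable) quotient, Lemma \ref{lem2.6} yields $0 \to E' \to E \to {}_{X_t}W \to 0$ with $\Delta(E) = \Delta(E') - 2r\, d(W)$. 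Since $W$ is semi-stable on $\mathbb{P}^1$ with $\mu(W) < 0$, integrality gives $d(W) \leq -1$; equality $\Delta(E) = 2r$ then forces $d(W) = -1$, ${\rm rk}(W) = 1$ (so $W = \sO_{\mathbb{P}^1}(-1)$) and $\Delta(E') = 0$. A final application of Theorem \ref{thm2.4} identifies $E' = f^* V$ for a stable $V$ on $C$, displaying $E$ in the Hecke presentation $0 \to f^*V \to E \to {}_{X_t}\sO_{\mathbb{P}^1}(-1) \to 0$.

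The main obstacle lies in the non-semi-stable case, where the strict bound depends essentially on $(1,1)$-stability: with only $(0,1)$-stability one obtains $(\mu(E) - \mu(E_i)) \cdot {\rm rk}(E_i) > {\rm rk}(E_i)/r$, which combined with $\mu_i - \mu_{i+1} \geq 1$ is too weak to separate $\Delta(E)$ from $2r$. This is why Lemma \ref{lem3.1} is invoked and why the exceptional case $g=3$, $r=2$, $(2,d)=2$ must be excluded from the characterization of equality; the semi-stable case needs no such exclusion.
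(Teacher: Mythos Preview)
Your argument is essentially identical to the paper's: the same case split on generic semi-stability along $f$, the same use of Theorem~\ref{thm2.2} together with $(1,1)$-stability and integrality of slopes on $\mathbb{P}^1$ in the non-semi-stable case, and the same elementary-transformation analysis via Lemma~\ref{lem2.6} and Theorem~\ref{thm2.4} in the semi-stable case to force the Hecke presentation. One small slip: to trivialize $E$ on the generic fiber of $f:C\times\mathbb{P}^1\to C$ you must tensor by the pullback of a line bundle from $\mathbb{P}^1$ (via $\pi$), not from $C$; pulling back from $C$ does nothing on those fibers.
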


At the end of this section, we give some examples of elliptic curves
on $M$. Let us recall the construction of Hecke curves. Let
$\sU_C(r,d-1)$ be the moduli space of stable bundles of rank $r$ and
degree $d-1$. Let
$$\mathfrak{O}\subset\sU_C(r,d-1)$$ be the open set of $(1,0)$-stable
bundles. Let $C\times\mathfrak{O}\xrightarrow\psi J^d(C)$ be defined
as $\psi(x, V)=\sO_C(x)\otimes{\rm det}(V)$ and
$$\mathscr{R}_C:=\psi^{-1}(\sL)\subset C\times \mathfrak{O},$$
which consists of the points $(x,V)$ such that $V$ are
$(1,0)$-stable bundles on $C$ with ${\rm det}(V)=\sL(-x)$. There
exists a projective bundle
$$p:\mathscr{P}\to \mathscr{R}_C$$
such that for any $(x,V)\in \mathscr{R}_C$ we have $p^{-1}(x,V)=\Bbb
P(V_x^{\vee})$. Let
$$V_x^{\vee}\otimes\sO_{\mathbb{P}(V_x^{\vee})}\to\sO_{\Bbb P
(V_x^{\vee})}(1)\to 0$$ be the universal quotient, $f:C\times \Bbb
P(V_x^{\vee})\to C$ be the projection, and
$$0\to\mathscr{E}^{\vee}\to
f^*V^{\vee}\to\,_{\{x\}\times\Bbb P(V_x^{\vee})} \sO_{\Bbb
P(V_x^{\vee})}(1)\to 0$$ where $\mathscr{E}^{\vee}$ is defined to
the kernel of the surjection. Take dual, we have \ga{3.2} {0\to
f^*V\to\mathscr{E}\to\,_{\{x\}\times\Bbb P(V_x^{\vee})} \sO_{\Bbb
P(V_x^{\vee})}(-1)\to 0,} which, at any point $\xi=(V_x^{\vee}\to
\Lambda\to 0)\in \Bbb P(V_x^{\vee})$, gives exact sequence
$$0\to V\xrightarrow\iota\mathscr{E}_{\xi}\to \,\sO_x\to 0$$ on $C$
such that ${\rm ker}(\iota_x)=\Lambda^{\vee}\subset V_x$. $V$ being
$(1,0)$-stable implies stability of $\mathscr{E}_{\xi}$. Thus
\eqref{3.2} defines \ga{3.3}{\Psi_{(x,V)}:\,\,\Bbb
P(V_x^{\vee})=p^{-1}(x,V)\to M.}

\begin{defn}\label{defn3.4} The images (under $\{\Psi_{(x,V)}\}_{(x,V)\in\mathscr{R}_C}$) of lines in the fibres of
$p: \mathscr{P}\to \mathscr{R}_C$ are the so called \textbf{Hecke
curves} in $M$. The images (under
$\{\Psi_{(x,V)}\}_{(x,V)\in\mathscr{R}_C}$) of elliptic curves in
the fibres of
$$p: \mathscr{P}\to \mathscr{R}_C$$ are called \textbf{elliptic curves of
Hecke type}.
\end{defn}

It is known (cf. \cite[Lemma 5.9]{NR}) that the morphisms in
\eqref{3.3} are closed immersions. By a straightforward computation,
we have \ga{3.4}{\Psi_{(x,V)}^*(-K_M)=\sO_{\Bbb P(V_x^{\vee})}(2r).}
For any point $[W]\in M$ and $(W_x\to \Bbb C\to 0)\in \Bbb P(W_x)$,
where $W$ is $(1,1)$-stable, we define a $(1,0)$-stable bundle $V$
by
$$0\to V\xrightarrow\alpha  W\to\,_x\Bbb C\to 0.$$
Then the images of $p^{-1}(x,V)=\Bbb P(V_x^{\vee})$ are projective
spaces that pass through $[W]\in M$, and the images of lines $\ell
\subset \Bbb P(V_x^{\vee})$ that pass through $[{\rm
ker}(\alpha_x)]\in \Bbb P(V_x^{\vee})$ are Hecke curves passing
through $[W]\in M$.

\begin{ex}\label{ex3.5} When $g\ge 4$ and $r>2$, for generic $[W]\in M$,
the images of smooth elliptic curves $B\subset \Bbb
P(V_x^{\vee})$ with degree $3$ and $[{\rm ker}(\alpha_x)]\in B$ are
smooth elliptic curves on $M$ that pass through $[W]\in M$, which
have degree $6r$ by \eqref{3.4}.
\end{ex}

If we do not require the curve $\phi:B\to M$ passing through generic
point of $M$, we may construct rational curves and elliptic curves
with smaller degree. Let us recall the Construction 2.3 from
\cite{MS}.

For any given $r$ and $d$, let $r_1$, $r_2$ be positive integers and
$d_1$, $d_2$ be integers that satisfy the equalities $r_1+r_2=r$,
$d_1+d_2=d$ and
$$r_1\frac{d}{(r,d)}-d_1\frac{r}{(r,d)}=1,\quad d_2\frac{r}{(r,d)}-r_2\frac{d}{(r,d)}=1.$$
Let $\sU_C(r_1,d_1)$ (resp. $\sU_C(r_2,d_2)$) be the moduli space of
stable vector bundles with rank $r_1$ (resp. $r_2$) and degree $d_1$
(resp. $d_2$). Then, since $(r_1,d_1)=1$ and $(r_2,d_2)=1$, there
are universal vector bundles $\sV_1$, $\sV_2$ on $C\times
\sU_C(r_1,d_1)$ and $C\times\sU_C(r_2,d_2)$ respectively. Consider
$$\sU_C(r_1,d_1)\times\sU_C(r_2,d_2)\xrightarrow{{\rm det}(\bullet)\times
{\rm det}(\bullet)} J^{d_1}_C\times
J^{d_2}_C\xrightarrow{(\bullet)\otimes (\bullet)}J^d_C,$$ let
$\sR(r_1,d_1)$ be its fiber at $[\sL]\in J^d_C$. The pullback of
$\sV_1$, $\sV_2$ by the projection $C\times\sR(r_1,d_1)\to
C\times\sU_C(r_i,d_i)$ ($i=1,2$) is still denoted by $\sV_1$,
$\sV_2$ respectively. Let $p:C\times\sR(r_1,d_1)\to \sR(r_1,d_1)$
and $$\sG=R^1p_*(\sV_2^{\vee}\otimes\sV_1),$$ which is locally free
of rank $r_1r_2(g-1)+(r,d)$. Let $$q: P(r_1,d_1)=\Bbb
P(\sG)\to\sR(r_1,d_1)$$ be the projective bundle parametrzing
$1$-dimensional subspaces of $\sG_t$ ($t\in\sR(r_1,d_1)$) and
$f:C\times P(r_1,d_1)\to C$, $\pi:C\times P(r_1,d_1)\to P(r_1,d_1)$
be the projections. Then there is a universal extension
\ga{3.5}{0\to (id\times
q)^*\sV_1\otimes\pi^*\sO_{P(r_1,d_1)}(1)\to\sE\to (id\times
q)^*\sV_2 \to 0} on $C\times P(r_1,d_1)$ such that for any
$x=([V_1], [V_2], [e])\in P(r_1,d_1)$, where $[V_i]\in
\sU_C(r_i,d_i)$ with ${\rm det}(V_1)\otimes {\rm det}(V_2)=\sL$ and
$[e]\subset {\rm H}^1(C,V_2^{\vee}\otimes V_1)$ being a line through
the origin, the bundle $\sE|_{C\times\{x\}}$ is the isomorphic class
of vector bundles $E$ given by extensions
$$0\to V_1\to V\to V_2\to 0 $$
that defined by vectors on the line $[e]\subset {\rm
H}^1(C,V_2^{\vee}\otimes V_1)$. Then $V$ must be stable by
\cite[Lemma 2.2]{MS}, and the sequence \eqref{3.5} defines
$$\Phi:P(r_1,d_1)\to \sS\sU_C(r,\sL)^s=M.$$
On each fiber $q^{-1}(\xi)=\Bbb P({\rm H}^1(V_2^{\vee}\otimes V_1))$
at $\xi=(V_1,V_2)$, the morphisms
\ga{3.6}{\Phi_{\xi}:=\Phi|_{q^{-1}(\xi)}:q^{-1}(\xi)=\Bbb P({\rm
H}^1(V_2^{\vee}\otimes V_1))\to M} is birational and
$\Phi_{\xi}^*(-K_M)=\sO_{\Bbb P({\rm H}^1(V_2^{\vee}\otimes
V_1))}(2(r,d))$ by \cite[Lemma 2.4]{MS}.

\begin{ex}\label{ex3.6} The images of lines $\ell\subset \Bbb P({\rm H}^1(V_2^{\vee}\otimes
V_1))$ are rational curves of degree $2(r,d)$ on $M$, which is
clearly the minimal degree since $-K_M=2(r,d)\Theta$. For smooth
elliptic curves $B\subset \Bbb P({\rm H}^1(V_2^{\vee}\otimes V_1))$
of degree $3$, the images of $\Phi_{\xi}: B\to M$ are of degree
$6(r,d)$. For any smooth elliptic curve $B\subset q^{-1}(\xi)$
($\forall\,\,\xi\in \sR(r_1,d_1)$), the images of $\Phi_{\xi}:B\to
M$ are called \textbf{elliptic curves of split type}.
\end{ex}

\section{Minimal elliptic curves on moduli spaces}

In this section, we consider the moduli space $M$ of rank $2$ stable
bundles on $C$ with a fixed determinant $\sL$ of degree $1$. We also
assume that the curve $C$ is generic in the sense that $C$ admits no
surjective morphism to an elliptic curve. With this assumption, we
know that ${\rm Pic}(C\times B)={\rm Pic}(C)\times {\rm Pic}(B)$ for
any elliptic curve $B$.

For a morphism $\phi: B\to M$, it may happen that the normalization
of $\phi(B)$ is a rational curve. To avoid this case, we make the
following definition

\begin{defn}\label{defn4.1} $\phi:B\to M$ is called an essential
elliptic curve of $M$ if the normalization of $\phi(B)$ is an
elliptic curve.
\end{defn}

For any morphism $\phi: B\to M$, let $E$ be the vector bundle on
$X=C\times B$ that defines $\phi$. It will be free to tensor $E$
with a pull-back of line bundles on $B$. In this section, $B$ will
always denote an elliptic curve.

\begin{prop}\label{prop4.2} Let $\phi: B\to M$ be an essential
elliptic curve of $M$ defined by a vector bundle $E$. If $E$ is not
semi-stable on the generic fiber of $f:X\to C$, then
$$\Delta(E)\ge 6.$$
If $g=g(C)\ge 4$ and the curve $\phi: B\to M$ passes through the
generic point of $M$, then
$$\Delta(E)> 12.$$
\end{prop}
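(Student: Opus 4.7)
The plan is to apply Theorem \ref{thm2.2} with $n = 2$: since $r = 2$ and $E$ is not semi-stable on the generic fiber of $f$, the relative Harder-Narasimhan filtration over $C$ reads $0 \subset E_1 \subset E$ with $E_1$ and $F_2 := E/E_1$ of rank one, and flatness of $F_2$ over $C$ combined with smoothness of $X$ forces both to be line bundles. Using the hypothesis $\Pic(X) = \Pic(C) \times \Pic(B)$, I write $E_1 = f^*D_1 \otimes \pi^*L_1$ and $F_2 = f^*D_2 \otimes \pi^*L_2$, and set $d_i = \deg D_i$, $\mu_i = \deg L_i$. Then $d_1 + d_2 = 1$, $b := \mu_1 - \mu_2 \geq 1$, $\mu(E) = 1/2$, and $\mu(E_1) = d_1$. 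Because line bundles on a surface have vanishing $c_1^2$ and $c_2$, formula \eqref{2.1} collapses to $\Delta(E) = 4(\tfrac12 - d_1)b$, and $\Delta(E) \geq 0$ together with $b \geq 1$ forces $d_1 \leq 0$. Setting $a := -d_1 \geq 0$ gives
$$\Delta(E) = 2b(2a+1).$$

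The central step is to push the trivial bound upward by using essentialness to exclude $b \in \{1, 2\}$. The extension $0 \to E_1 \to E \to F_2 \to 0$ is classified by $\text{Ext}^1(F_2, E_1)$; since $\deg(D_1 D_2^{-1}) = -(2a+1) < 0$ and $\deg(L_1 L_2^{-1}) = b > 0$, Künneth identifies it with $H^1(C, D_1 D_2^{-1}) \otimes H^0(B, L_1 L_2^{-1})$. Writing an element as $e = \sum_i \xi_i \otimes s_i$, its restriction to $C \times \{y\}$ is $\sum_i s_i(y)\xi_i \in H^1(C, D_1 D_2^{-1})$, so the projective class of $E|_{C\times\{y\}}$ is controlled by evaluating the linear system $|L_1 L_2^{-1}|$ on $B$. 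If $b = 1$ then $h^0(L_1 L_2^{-1}) = 1$, so $e = \xi \otimes s$; wherever $s$ is nonzero $E|_{C\times\{y\}}$ is a fixed stable bundle up to isomorphism, so $\phi$ is constant on a dense open and hence constant. If $b = 2$ then $h^0(L_1 L_2^{-1}) = 2$ and the image $V$ of $e : H^0(L_1L_2^{-1})^* \to H^1(C, D_1 D_2^{-1})$ has $\dim V \leq 2$; the case $\dim V \leq 1$ again produces a constant $\phi$, while if $\dim V = 2$ the map $y \mapsto [e(y)] \in \Bbb P(V) \cong \Bbb P^1$ is the degree-two morphism defined by $|L_1 L_2^{-1}|$, whence $\phi$ factors through $\Bbb P^1 \to M$ and $\phi(B)$ is a rational curve. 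In every subcase the normalization of $\phi(B)$ is rational, contradicting essentialness; thus $b \geq 3$, and $\Delta(E) \geq 6$, proving the first assertion.

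For the second assertion, the generic-point hypothesis forces $E|_{C\times\{y\}}$ to be $(1,1)$-stable for generic $y$, so every line subbundle has degree at most $-1$. Since flatness of $F_2$ over $B$ makes $E_1|_{C\times\{y\}} \cong D_1$ a genuine line subbundle of $E|_{C\times\{y\}}$ of degree $d_1$, we get $d_1 \leq -1$, i.e. $a \geq 1$. Combined with $b \geq 3$ established above, this yields $\Delta(E) \geq 2 \cdot 3 \cdot 3 = 18 > 12$. The main technical obstacle throughout is the dichotomy in case $b = 2$: one must show that whenever the pencil $|L_1 L_2^{-1}|$ on $B$ meets a two-dimensional subspace of $\text{Ext}^1_C(D_2, D_1)$, the induced morphism $\phi$ must have rational image. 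This rests on the Künneth identification of $\text{Ext}^1(F_2, E_1)$ and the elementary fact that a degree-two line bundle on an elliptic curve defines a degree-two map to $\Bbb P^1$; once in place, all remaining estimates follow by direct arithmetic in $a$ and $b$.
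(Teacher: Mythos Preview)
Your argument has a genuine gap: you assert that ``flatness of $F_2$ over $C$ combined with smoothness of $X$ forces both to be line bundles,'' but this is false for $F_2$. While $E_1$ is indeed locally free (its restriction to every fibre $X_t$ is a subsheaf of the locally free $E|_{X_t}$ on a curve, hence torsion-free, hence a line bundle, and Simpson's lemma applies), the quotient $F_2=E/E_1$ is only known to be a line bundle on the \emph{generic} fibre. A rank-one torsion-free sheaf on the smooth surface $X$ has the form $L\otimes I_Z$ with $Z$ zero-dimensional, and such a sheaf is automatically $C$-flat (it is a subsheaf of a line bundle, hence torsion-free over the Dedekind base $C$); so $C$-flatness does not force $Z=\emptyset$. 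In formula \eqref{2.1} the term $c_2(F_2)=\mathrm{length}(Z)$ need not vanish, and your identity $\Delta(E)=2b(2a+1)$ is only valid in the special case $c_2(F_2)=0$.

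The paper treats the two cases separately. When $c_2(F_2)=0$ its argument is essentially yours: the extension defines $\psi:B\to\Bbb P(H^1(V_2^{-1}\otimes V_1))$ with $\psi^*\sO(1)$ of degree $\mu_1-\mu_2$, and essentialness forces $\mu_1-\mu_2\ge 3$. When $c_2(F_2)\neq 0$ one must work harder: using $B$-flatness of $F_2$ one finds a fibre $C\times\{y_0\}$ on which $F_2$ acquires torsion, and stability of $E_{y_0}$ then yields $\mu(E_1)\le -1$, giving $\Delta(E)\ge 4c_2(F_2)+4\cdot\tfrac32\cdot 1\ge 10$. The generic-point statement in this second case requires a further elementary transformation and a separate estimate $\Delta(E')\ge 8$ to reach $\Delta(E)\ge 14$. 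Your proof handles only the first case; you still owe the entire analysis of the case $c_2(F_2)\neq 0$.
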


\begin{proof} Let $0\to E_1\to E\to F_2\to 0$ be the relative
Harder-Narasimhan filtration over $C$. Then we have exact sequence
$$0\to E_1|_{X_t}\to E|_{X_t}\to F_2|_{X_t}\to 0$$
on each fiber $X_t=\{t\}\times B$ of $f:X\to C$ since $E_1$, $F_2$
are flat over $C$. Thus $E_1$ is locally free (cf. Lemma 1.27 of
\cite{Si}) and
\ga{4.1}{\Delta(E)=4c_2(F_2)+4(\mu(E)-\mu(E_1))(\mu_1-\mu_2)} where
$\mu_1={\rm deg}(E_1|_{X_t})$, $\mu_2={\rm deg}(F_2|_{X_t})$ for
$t\in C$ (cf. Theorem 2.2).

That $0\to E_1\to E\to F_2\to 0$ is the relative Harder-Narasimhan
filtration over $C$ means for almost $t\in C$ the exact sequences
$$0\to E_1|_{X_t}\to E|_{X_t}\to F_2|_{X_t}\to 0$$
are the Harder-Narasimhan filtration of $E|_{X_t}$, which in
particular means that $F_2$ is locally free over $f^{-1}(C\setminus
T)$ where $T\subset C$ is a finite set. Thus \ga{4.2}{0\to
E_1|_{C\times\{y\}}\to E|_{C\times\{y\}}\to F_2|_{C\times\{y\}}\to
0\,\,,\quad \forall\,\,y\in B} are exact sequences, which imply that
$F_2$ is also $B$-flat.

If $c_2(F_2)=0$, then $F_2$ is a line bundle and there are line
bundles $V_1$, $V_2$ on $C$ such that
$$E_1=f^*V_1\otimes\pi^*\sO(\mu_1),\quad
F_2=f^*V_2\otimes\pi^*\sO(\mu_2)$$ where $\sO(\mu_i)$ denote line
bundles on $B$ of degree $\mu_i$. Replace $E$ by $E\otimes
\pi^*\sO(-\mu_2)$, we can assume that $E$ satisfies \ga{4.3}{0\to
f^*V_1\otimes\pi^*\sO(\mu_1-\mu_2)\to E\to f^*V_2\to 0.} Let
$d_i={\rm deg}(V_i)$ and $J=\{(L_1,L_2)\in J_C^{d_1}\times
J_C^{d_2}\,|\,L_1\otimes L_2=\sL\,\}.$ Then there is a projective
bundle $q:P\to J$ and an universal extension \ga{4.4}{0\to (id\times
q)^*\sV_1\otimes\pi^*\sO_P(1)\to\sE\to (id\times q)^*\sV_2 \to 0} on
$C\times P$ such that for any $x=([V_1], [V_2], [e])\in P$, where
$[V_i]\in J_C^{d_i}$ with $V_1)\otimes V_2=\sL$ and $[e]\subset {\rm
H}^1(C,V_2^{-1}\otimes V_1)$ being a line through the origin, the
bundle $\sE|_{C\times\{x\}}$ is the isomorphic class of vector
bundles $V$ given by extensions $0\to V_1\to V\to V_2\to 0 $ that
defined by vectors on the line $[e]\subset {\rm
H}^1(C,V_2^{-1}\otimes V_1)$, where $\sV_i$ denote the pullback
(under $C\times J\to C\times J_C^{d_i}$) of universal line bundles,
and $\pi:C\times P\to P$ denote the projection. Thus the exact
sequence \eqref{4.3} induces a morphism \ga{4.5}{\psi:B\to \Bbb
P^{d_2-d_1+g-2}= q^{-1}(V_1,V_2)\subset P} such that
$\sO(\mu_1-\mu_2)=\psi^*\sO_P(1)$ and $\phi:B\to M$ factors through
$\psi:B\to \psi(B)\subset\Bbb P^{d_2-d_1+g-2}$, which implies that
the normalization of $\psi(B)$ is an elliptic curve. Hence
$\mu_1-\mu_2\ge 3$ and $\Delta(E)\ge 6$ by \eqref{4.1}. If
$\phi:B\to M$ passes through the generic point, then
$\mu(E)-\mu(E_1)>1$ and $\Delta(E)>12$.

If $c_2(F_2)\neq 0$, $F_2$ is not locally free, which implies that
there is a $y_0\in B$ such that $F_2|_{C\times\{y_0\}}$ has torsion
$\tau(F_2|_{C\times\{y_0\}})\neq 0$ since $F_2$ is $B$-flat (cf.
Lemma 1.27 of \cite{Si}). Let
\ga{4.6}{0\to\tau(F_2|_{C\times\{y_0\}})\to F_2|_{C\times\{y_0\}}\to
F_2^0\to 0.}  Then $F_2^0$ being a quotient line bundle of
$E|_{C\times\{y_0\}}$ implies
$${\rm deg}(F_2^0)>\mu(E|_{C\times\{y_0\}})=\frac{1}{2}$$
since $E|_{C\times\{y_0\}}$ is stable. By sequences \eqref{4.2} and
\eqref{4.6}, we have
$$\mu(E_1)={\rm deg}(E_1|_{C\times\{y_0\}})=1-{\rm deg}(F_2^0)-{\rm
dim}\,\tau(F_2|_{C\times\{y_0\}})\le -1$$ which, by the formula
\eqref{4.1}, implies that $$\Delta(E)\ge 4
c_2(F_2)+4(\frac{1}{2}+1)(\mu_1-\mu_2)\ge 10.$$

When $\phi:B\to M$ passes through a generic point, in order to show
$\Delta(E)>12$, we note that $c_2(F_2)\neq 0$ and $F_2$ being
$C$-flat also imply that there exists a $t_0\in C$ such that
$F_2|_{X_{t_0}}$ has torsion $\tau(F_2|_{X_{t_0}})\neq 0$. Let
$0\to\tau(F_2|_{X_{t_0}})\to F_2|_{X_{t_0}}\to \sQ\to 0$ and
$E'={\rm ker}(E\to\,_{X_{t_0}}\sQ)$, then $$0\to E'\to
E\to\,_{X_{t_0}}\sQ\to 0$$ which, for any $y\in B$, induces exact
sequence \ga{4.7}{0\to E'|_{C\times\{y\}}\to
E|_{C\times\{y\}}\to\,_{(t_0,y)}\sQ\to 0.} Thus all
$E'_y:=E'|_{C\times\{y\}}$ are semi-stable of degree $0$. If
$\phi:B\to M$ passes through a generic point, then there is a
$y_0\in B$ such that $E_{y_0}$ is $(1,1)$-stable on
$X_{y_0}=C\times\{y_0\}$, thus $E'_{y_0}$ is stable by \eqref{4.7}
and Lemma \ref{lem3.2}. This implies that $\Delta(E')>0$. Otherwise
$\{E'_y\}_{y\in B}$ are $s$-equivalent by applying Theorem
\ref{thm2.4} to $\pi:X\to B$, which implies $E'=f^*V\otimes \pi^*L$
for a stable bundle $V$ on $C$ and a line bundle $L$ on $B$. Then
$E_t=E'_t=L\oplus L$ for any $t\neq t_0$, which is a contradiction
since $E$ is not semi-stable on the generic fiber of $f:X\to C$.

To compute $\Delta(E')$, consider the Harder-Narasimhan filtration
$$0\to E'_1\to E'\to F'_2\to 0$$
over $C$, let $\mu_1'={\rm deg}(E'_1|_{X_t})$, $\mu_2'={\rm
deg}(F'_2|_{X_t})$ for $t\in C$, then
$$\Delta(E')=4c_2(F'_2)+4(\mu(E')-\mu(E'_1))(\mu_1'-\mu_2')\ge 8.$$
To see it, we can assume $c_2(F'_2)=0$, then there are line bundles
$V_i'$ on $C$ and line bundles $\sO(\mu_i')$ on $B$ of degree
$\mu_i'$ such that
$$0\to f^*V'_1\otimes\pi^*\sO(\mu_1'-\mu_2')\to
E'\otimes\pi^*\sO(-\mu_2')\to f^*V'_2\to 0$$ which defines a
morphism $\psi:B\to \Bbb P$ to a projective space such that
$\sO(\mu_1'-\mu_2')=\psi^*\sO_{\Bbb P}(1)$. Thus $\mu_1'-\mu_2'\ge
2$ and $\Delta(E')\ge 8$. Then
$$\Delta(E)=\Delta(E')+4(\mu(E|_{X_{t_0}})-\mu(\sQ))\ge \Delta(E')+6\ge
14.$$

\end{proof}

Now we consider the case that $E$ is semi-stable on the generic
fiber of $f: X\to C$. We can assume $0\le {\rm deg}(E|_{X_t})\le 1$
on $X_t=f^{-1}(t)$.

\begin{prop}\label{prop4.3} When $E$ is semi-stable of degree $1$ on the generic
fiber of $f:X\to C$, we have $\Delta(E)\ge 10$. If $g>4$ and
$\phi:B\to M$ passes through the generic point, then $\Delta(E)\ge
14.$
\end{prop}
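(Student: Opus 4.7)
The plan is to exploit the rigidity of stable rank-$2$ bundles on the elliptic curve $B$ with fixed odd determinant: for $\deg L_B = 1$, the moduli space $\mathcal{SU}_B(2, L_B)$ is a single point $\{[W]\}$, with Atiyah's unique stable $W$ satisfying $h^0(W) = 1$. Since $E$ is semi-stable of degree $1$ on the generic $f$-fiber, the relative Harder--Narasimhan filtration over $C$ is trivial, so formula \eqref{2.1} combined with the Picard decomposition $c_1(E) = F_B + F_C$ (yielding $c_1(E)^2 = 2$) reduces the claim to showing $c_2(E) \ge 3$ (respectively $c_2(E) \ge 4$), since $\Delta(E) = 4c_2(E) - 2$.

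First I would produce a canonical line subbundle $L \subset E$. Since $h^0(W) = 1$, the pushforward $L_1 := f_* E$ is a line bundle on $C$; its evaluation $f^* L_1 \to E$ saturates to a line subbundle $L \subset E$. Using $\Pic(X) = \Pic(C) \times \Pic(B)$ and identifying $L|_{X_t}$ with the unique section subbundle $\mathcal{O}_B \hookrightarrow W$ for generic $t$, one gets $L = f^* L_C^{(1)}$ for some $L_C^{(1)} \in \Pic(C)$ of degree $\ell_1$. The quotient $Q := E/L$ is rank-$1$ torsion-free, hence of the form $Q \cong L_Q \otimes \mathcal{I}_Z$ with $L_Q = f^* L_C^{(2)} \otimes \pi^* L_B$ and a $0$-dimensional subscheme $Z \subset X$. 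A direct computation then gives $c_2(E) = \ell_1 + \ell(Z)$.

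Next I would rule out the degenerate configurations. Stability of generic $E_y$ forces $\ell_1 \le 0$. If $Z = \emptyset$, K\"unneth together with $H^0(V) = 0 = H^0(L_B^{-1})$ (where $V := L_C^{(1)} \otimes L_C^{(2),-1}$ has degree $2\ell_1 - 1 < 0$) yields $\mathrm{Ext}^1_X(L_Q, L) = H^1(X, f^* V \otimes \pi^* L_B^{-1}) = 0$, forcing $E = L \oplus L_Q$; but then the generic $E_y = L_C^{(1)} \oplus L_C^{(2)}$ is destabilized by $L_C^{(2)}$ (of degree $1 - \ell_1 \ge 1$), a contradiction. Hence $Z \ne \emptyset$, and at each $y \in \pi(Z)$ the saturation $L_C^{(1)}(D_y) \hookrightarrow E_y$ (with $D_y := Z \cap \pi^{-1}(y)$) must satisfy $\ell_1 + \ell(D_y) \le 0$, forcing $\ell_1 \le -1$. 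The family of extension classes of $0 \to L_C^{(1)} \to E_y \to L_C^{(2)} \to 0$ then defines a morphism $\tilde{\psi}\colon B \to \mathbb{P}(H^1(C, V))$; via the birational morphism of Construction~2.3 (Example~\ref{ex3.6}) from this projective space to the split-type locus in $M$, $\phi$ factors through $\tilde{\psi}$, and essential ellipticity forces $\deg \tilde{\psi} \ge 3$. A Chern-class comparison between $E$ and the pullback of the universal extension, accounting for the $\mathcal{I}_Z$-modification via the relative $\mathrm{Ext}^1$ spectral sequence, then yields $\ell(Z) \ge 3 - \ell_1$, so $c_2(E) \ge 3$ and $\Delta(E) \ge 10$. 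In the generic-point case, $(1,1)$-stability of generic $E_y$ (Lemma~\ref{lem3.2}) strengthens the saturation bound to $\ell_1 + \ell(D_y) \le -1$, and the hypothesis $g > 4$ suffices to upgrade the key inequality to $\ell(Z) \ge 4 - \ell_1$, giving $\Delta(E) \ge 14$.

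The main obstacle is the precise degree computation relating $\deg \tilde{\psi}$ to $\ell(Z)$. The relative extension class of $E$ over $B$ is a section of the relative $\mathrm{Ext}^1$-sheaf, whose ``continuous'' part $R^1 \pi_* \mathcal{H}om(Q, L) = H^1(V) \otimes L_B^{-1}$ has no global sections (since $\deg L_B^{-1} = -1 < 0$); the non-vanishing of $[\xi_y]$ must therefore come from the torsion contributions $\pi_* \mathcal{E}xt^1(Q, L)$ supported on $\pi(Z)$. Establishing that the total ``pole order'' from these torsion contributions equals $\deg \tilde{\psi}$ up to a small correction (determined by $\ell_1$ and $\deg L_B$) is the technical heart of the argument.
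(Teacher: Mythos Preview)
Your approach is genuinely different from the paper's. The paper does not factor $\phi$ through a split-type morphism; instead it performs a sequence of elementary transformations along fibres $X_{t_i}$ of $f$ (destabilising quotients $E^{(i)}_{t_{i+1}}\to\sO(\mu_{i+1})$) until $\Delta(E^{(s)})=0$, at which point Theorem~\ref{thm2.4} forces $E^{(s)}\cong\pi^*V\otimes f^*L$. One then obtains $\Delta(E)=2s-4\sum\mu_i$, and the inequality $\deg(f_*E)\le -1$ (proved via the sequence $0\to f^*f_*E\to E\to\sF\to 0$, much as you do) translates into $s\ge 3+2h^0(\sO(\mu_s))$, giving $\Delta(E)\ge 10$ by a short case split on $\mu_s$. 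For the generic-point statement the paper argues by \emph{dimension count}: if $\deg(f_*E)=-1$, the generic $E_y$ is a nonsplit extension $0\to\xi\to E_y\to\xi^{-1}\otimes\sL\to 0$ with $\deg\xi=-1$, and this locus has dimension $2g+1<3g-3$ for $g>4$; hence $\deg(f_*E)\le -2$ and the same numerics give $\Delta(E)\ge 14$.

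Your proposal has two concrete gaps. First, the step you yourself flag as the ``technical heart'' is never carried out: you need a precise identity relating $\deg\tilde\psi^*\sO(1)$ to $\ell(Z)$ and $\ell_1$, and the low-terms exact sequence for relative $\mathrm{Ext}$ does not by itself produce it. (There is a cleaner route you miss: since $E$ and the pullback $\tilde\psi^*\sE$ of the universal extension define the \emph{same} map $\phi$, one has $\Delta(E)=\Delta(\tilde\psi^*\sE)=2d(1-2\ell_1)$ with $d=\deg\tilde\psi^*\sO(1)$; combined with your $\ell_1\le -1$ and $d\ge 3$ this gives $\Delta(E)\ge 18$, more than enough. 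But this is not what you wrote.) Second, your generic-point argument is incorrect as stated: you invoke $(1,1)$-stability of $E_y$ at points $y\in\pi(Z)$, whereas $(1,1)$-stability is only guaranteed for \emph{generic} $y\in B$, and a priori $\pi(Z)$ could consist entirely of non-generic points. You also give no indication of how the hypothesis $g>4$ enters; in the paper it is used only through the dimension inequality $2g+1<3g-3$, and your outline contains nothing analogous.
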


\begin{proof} It is easy to see that there is a unique stable rank $2$ vector
bundle with a fixed determinant of degree $1$ on an elliptic curve.
Thus $\Delta(E)>0$ if and only if there exists $t_1\in C$ such that
$E_{t_1}=E|_{X_{t_1}}$ is not semi-stable.

Let $E_{t_1}\to \sO(\mu_1)\to 0$ be the quotient of minimal degree
and
$$0\to E^{(1)}\to E\to \,_{X_{t_1}}\sO(\mu_1)\to 0$$
be the elementary transformation of $E$ along $\sO(\mu_1)$ at
$X_{t_1}$. If $E^{(i)}$ is defined and $\Delta(E^{(i)})>0$, let
$t_{i+1}\in C$ such that $E^{(i)}_{t_{i+1}}=E^{(i)}|_{X_{t_{i+1}}}$
is not semi-stable and $E^{(i)}_{t_{i+1}}\to\sO(\mu_{i+1})\to 0$ be
the quotient of minimal degree, then we define $E^{(i+1)}$ to be the
elementary transformation of $E^{(i)}$ along $\sO(\mu_{i+1})$ at
$X_{t_{i+1}}$, namely $E^{(i+1)}$ satisfies the exact sequence
\ga{4.8}{0\to E^{(i+1)}\to E^{(i)}\to
\,_{X_{t_{i+1}}}\sO(\mu_{i+1})\to 0.} Let $s$ be the minimal integer
such that $\Delta(E^{(s)})=0$. Then \ga{4.9}{\Delta(E)=2\cdot
s-4\sum^s_{i=1}\mu_i} where $\mu_i\le 0$ ($i=1\,,\,2\,,...\,,s$).
Take direct image of \eqref{4.8}, we have \ga{4.10}{0\to
f_*E^{(s)}\to f_*E^{(s-1)}\to\,_{t_{s}}{\rm H}^0(\sO(\mu_{s}))\to 0}
(since $R^1f_*E^{(s)}=0$) and ${\rm deg}(f_*E^{(i+1)})\le {\rm
deg}(f_*E^{(i)})$, which imply \ga{4.11} {{\rm deg}(f_*E^{(s)})\le
{\rm deg}(f_*E)-{\rm dim}\,{\rm H}^0(\sO(\mu_s)).} Restrict
\eqref{4.8} to a fiber $X_y=\pi^{-1}(y)$,  we have exact sequence
$$0\to E^{(i+1)}_y\to E^{(i)}_y\to\, _{(t_{i+1}, y)}\Bbb C\to 0,$$
which implies that \ga{4.12}{{\rm deg}(E^{(s)}_y)={\rm
deg}(E_y)-s=1-s.} On the other hand, by Theorem \ref{thm2.4},
$\Delta(E^{(s)})=0$ implies that there exist a stable rank $2$
vector bundle $V$ of degree $1$ on $B$ and a line bundle $L$ on $C$
such that $ E^{(s)}=\pi^*V\otimes f^*L$. It is easy to see $${\rm
deg}(E^{(s)}_y)=2\,{\rm deg}(L)=2\,{\rm deg}(f_*E^{(s)}).$$ Thus,
combine \eqref{4.11} and \eqref{4.12}, we have the inequality
\ga{4.13}{s\ge 1-2\,{\rm deg}(f_*E)+2\,{\rm dim}\,{\rm
H}^0(\sO(\mu_s)).}

We claim that ${\rm deg}(f_*E)\le -1$. To show it, consider
\ga{4.14}{0\to \sF':=f^*(f_*E)\to E\to \sF\to 0} where $\sF$ is
locally free on $f^{-1}(C\setminus T)$ and $T\subset C$ is a finite
set such that $E_t$ ($t\in T$) is not semi-stable. Thus, for any
$y\in B$, the sequence \ga{4.15}{0\to \sF'_y\to E_y\to \sF_y\to 0}
is still exact, which implies that $\sF$ is $B$-flat (cf. Lemma
2.1.4 of \cite{HL}). The sequence \eqref{4.15} already implies ${\rm
deg}(f_*E)={\rm deg}(\sF'_y)\le 0$ since $E_y$ is stable of degree
$1$. Thus $\sF$ can not be locally free since
$$4\cdot c_2(\sF)=\Delta(E)-4\cdot{\rm deg}(f_*E)+2>0.$$
Then there is at least a $y_0\in B$ such that $\sF_{y_0}$ has
torsion, otherwise $\sF$ is locally free (cf. Lemma 1.27 of
\cite{Si}). The stability of $E_{y_0}$ implies that $\sF_{y_0}/
torsion$ has degree at least $1$. Thus ${\rm deg}(\sF_{y_0})\ge 2$
and
$${\rm deg}(f_*E)={\rm deg}(\sF_{y_0}')\le -1,$$
which means $s\ge 3+2 \,{\rm dim}\,{\rm H}^0(\sO(\mu_s))$.
Therefore, if $\mu_s<0$,  we have $\Delta(E)\ge 2\cdot s+4\ge 10$ by
\eqref{4.9}. If $\mu_s=0$, by tensoring $E$ with
$\pi^*\sO(\mu_s)^{-1}$, we may assume ${\rm dim}\,{\rm
H}^0(\sO(\mu_s))=1$, then $s\ge 5$ and $$ \Delta(E)\ge 10.$$

If $\phi: B\to M$ passes through the generic point of $M$, we claim
that ${\rm deg}(f_*E)\le -2$, which implies $\Delta(E)\ge 14$. To
prove the claim, assume ${\rm deg}(f_*E)=-1$, we will show that
$\phi(B)$ lies in a given divisor. Note that $\sF_y$ must be locally
free of degree $2$ for generic $y\in B$ (if $\sF_y$ has nontrivial
torsion, then $E_y$ has a quotient line bundle of degree at most
$1$, which is impossible since $E_y$ is $(1,1)$-stable for generic
$y\in B$). Thus $E_y$ satisfies $0\to \xi\to E_y\to \xi^{-1}\otimes
\sL\to 0$ where $\xi$ is a line bundle of degree $-1$ on $C$. The
locus of such bundles has dimension at most $g+h^1(\xi^2\otimes
\sL^{-1})-1=2g+1<{\rm dim}(M)$ when $g>4$. We are done.
\end{proof}

Now we consider the case that $E$ is semi-stable of degree $0$ on
the generic fiber of $f: X\to C$. If $E$ is semi-stable on every
fiber of $f:X\to C$, then $E$ induces a non-trivial morphism
$$\varphi_E:C\to \Bbb P^1$$ (cf. \cite{FMW}) such that
$\varphi_E^*\sO_{\Bbb P^1}(1)=\Theta(E)=({\rm det}f_!E)^{-1}$, which
has degree $c_2(E)$ by Grothendieck-Riemann-Roch theorem. Thus
\ga{4.16}{\Delta(E)=4\cdot c_2(E)=4\cdot {\rm deg}(\varphi_E)\ge 8.}
If there is a $t_0\in C$ such that $E_{t_0}=E|_{X_{t_0}}$ is not
semi-stable on $X_{t_0}=f^{-1}(t_0)$, let $E_{t_0}\to \sO(\mu)\to 0$
be the quotient line bundle of minimal degree $\mu$ and $E'={\rm
kernel}\,(E\to \,_{X_{t_0}}\sO(\mu)\to 0\,)$, then we have

\begin{lem}\label{lem4.4} If $\Delta(E')=0$, then there is a semi-stable vector
bundle $V$ on $C$ and a line bundle $L$ of degree $0$ on $B$ such
that $$E'=f^*V\otimes \pi^*L.$$
\end{lem}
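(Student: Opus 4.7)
The plan is to derive the conclusion from Theorem \ref{thm2.4} applied to $E'$, followed by an adjunction argument to extract the global product structure.

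First I would verify the hypotheses of Theorem \ref{thm2.4} for $E'$. Local freeness is clear: $E'$ is the kernel of a surjection of the locally free $E$ onto $j_*\sO(\mu)$, where $j\colon X_{t_0}\hookrightarrow X$ is the inclusion of a smooth divisor carrying a line bundle, so $E'$ is locally free. The restriction $E'|_{X_t}=E|_{X_t}$ for $t\ne t_0$ is semi-stable by the standing hypothesis on $E$, so the semi-stability requirement holds. Using the Picard splitting $\Pic(C\times B)=\Pic(C)\times\Pic(B)$, one writes $\det E'=f^*A\otimes \pi^*N$, whence $\det(E'|_{\{x\}\times B})\cong N$ is independent of $x$, verifying the determinant hypothesis.

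Applying Theorem \ref{thm2.4} with $\Delta(E')=0$ then yields (i) every restriction $E'|_{\{x\}\times B}$ is semi-stable on $B$ and all of these are mutually $s$-equivalent, and (ii) all the bundles $V_y:=E'|_{C\times\{y\}}$ are mutually isomorphic to a common bundle, which I call $V$. Next I would check that $V$ is a semi-stable rank-$2$ bundle of degree $0$ on $C$: restricting the defining sequence to a $C$-fiber produces $0\to V_y\to E_y\to \C_{(t_0,y)}\to 0$, so $\deg V=\deg E_y-1=0$; and any line subbundle of $V$ embeds into the stable bundle $E_y$ of slope $1/2$, hence has degree at most $0=\mu(V)$, giving semi-stability.

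The remaining task, which I expect to be the main obstacle, is to promote the pointwise isomorphisms $V_y\cong V$ into a global isomorphism $E'\cong f^*V\otimes \pi^*L$. The clean case is when $V$ is stable (equivalently, simple for rank $2$): cohomology and base change shows that $\mathcal{L}:=\pi_*\sH om(f^*V,E')$ is a line bundle on $B$, since its fiber at each $y$ is $H^0(C,\sE nd(V))=\C$ of constant dimension $1$, and the adjoint map $f^*V\otimes\pi^*\mathcal{L}\to E'$ is, on each $C$-fiber, a nonzero scalar multiple of the identity on $V$, hence an isomorphism. Setting $L:=\mathcal{L}$ and matching degrees on a generic $B$-fiber, where $(f^*V\otimes\pi^*L)|_{X_t}=L^{\oplus 2}$ has degree $2\deg L$ and $E'|_{X_t}$ has degree $0$, forces $\deg L=0$. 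For strictly semi-stable $V$ (the genuine technical case), I would pass to the Jordan-Hölder factors of $V$ and repeat the adjunction argument on each simple factor, obtaining $E'$ as an iterated extension of bundles of the form $f^*L_i\otimes\pi^*M_i$; conclusion (i) of Theorem \ref{thm2.4}, together with the Picard splitting, then pins down the $M_i$ and forces the extensions to split, producing the claimed form.
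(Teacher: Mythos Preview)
Your verification of the hypotheses of Theorem~\ref{thm2.4} and your treatment of the case when $V$ is stable are fine; the adjunction argument via $\pi_*\sH om(f^*V,E')$ is a clean alternative route. The paper does not split into stable versus strictly semi-stable $V$; instead it normalizes so that $H^0(E'_t)\neq 0$ and separates according to whether $\dim H^0(E'_t)$ is $2$ (then $E'=f^*(f_*E')$ immediately) or $1$.

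The genuine gap is in your strictly semi-stable case. The conclusions of Theorem~\ref{thm2.4}, even applied in both directions, are \emph{not} enough to force $E'\cong f^*V\otimes\pi^*L$. A concrete obstruction: take $E'=\sO_X\oplus\pi^*M$ with $M$ a nontrivial degree-$0$ line bundle on $B$. Then $\Delta(E')=0$, all $E'_y\cong\sO_C^{\oplus 2}$, all $E'_x\cong\sO_B\oplus M$, yet $E'$ is not of the form $f^*V\otimes\pi^*L$. In your filtration $0\to f^*L_1\otimes\pi^*M_1\to E'\to f^*L_2\otimes\pi^*M_2\to 0$, conclusion~(i) only fixes the $s$-equivalence class of $M_1\oplus M_2$, not that $M_1\cong M_2$; and when $M_1\not\cong M_2$ the extension does split (by K\"unneth), but you land exactly on the bad example above rather than on the claimed form. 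So ``pins down the $M_i$ and forces the extensions to split'' does not finish the job.

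What rescues the lemma is the ambient context, which your sketch never invokes in this case: $E'$ is the elementary transformation of a bundle $E$ with every $E_y$ stable of degree $1$. The paper shows $V_1:=f_*E'=f_*E$ is a line bundle in the $\dim H^0(E'_t)=1$ case, and proves the key estimate $\deg(f_*E)\le -1$ via a torsion argument on the cokernel of $f^*f_*E\to E$ together with stability of $E_{y_0}$. This estimate is what produces the contradiction with the semi-stability of $E'_y$ and rules out the $1$-dimensional case entirely. Without bringing $E$ back into the picture, the argument cannot close.
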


\begin{proof} By the definition, $\{E'_t=E'|_{\{t\}\times B}\}_{t\in C}$ and $\{E'_y=E'|_{C\times\{y\}}\}_{y\in
B}$ are families of semi-stable bundles of degree $0$. Apply Theorem
\ref{thm2.4} to $f: X\to C$ (resp. $\pi:X\to B$), then
$\Delta(E')=0$ implies that $\{E'_t\}_{t\in C}$ (resp.
$\{E'_y\}_{y\in B}$) are isomorphic each other. By tensor $E$ (thus
$E'$) with $\pi^*L^{-1}$ (where $L$ is a line bundle of degree $0$
on $B$), we can assume that ${\rm H}^0(E'_t)\neq 0$
($\forall\,\,t\in C$), which have dimension at most $2$ since $E'_t$
is semi-stable of degree $0$. If ${\rm H}^0(E'_t)$ has dimension
$2$, then $E'=f^*(f_*E')$ and we are done.

If ${\rm H}^0(E'_t)$ has dimension $1$, we will show a
contradiction. In fact, by the definition of $E'$, we have an exact
sequence \ga{4.17} {0\to E'\to E\to\, _{X_{t_0}}\sO(\mu)\to 0} where
$\sO(\mu)$ is a line bundle on $\{t_0\}\times B\cong B$ of degree
$\mu<0$. Then
$$V_1:=f_*E=f_*E'$$
is a line bundle on $C$. Since $\{E'_t\}_{t\in C}$ are isomorphic
each other and ${\rm H}^0(E'_t)$ has dimension $1$, we have the
exact sequence \ga{4.18} {0\to f^*V_1\to E'\to
f^*V_2\otimes\pi^*L_0\to 0} for a line bundle $V_2$ on $C$ and a
degree $0$ line bundle $L_0$ on $B$. If $L_0\neq \sO_B$, then
$R^if_*(f^*(V_2^{-1}\otimes V_1)\otimes L_0)=V_2^{-1}\otimes
V_1\otimes{\rm H}^i(L_0)=0$ ($i=0,\,1$), which implies ${\rm
H}^1(X,f^*(V_2^{-1}\otimes V_1)\otimes L_0)=0$ and \eqref{4.18} is
splitting. This is impossible since $E'_y$ is semi-stable of degree
$0$ and we can show that ${\rm deg}(V_1)={\rm deg}(f_*E)\le -1$ in
the following.

To prove that ${\rm deg}(f_*E)\le -1$, we consider the exact
sequence \ga{4.19} {0\to f^*f_*E\to E\to \sF\to 0} where
$\sF|_{f^{-1}(C\setminus\{t_0\})}$ is locally free of rank $1$ by
\eqref{4.18}. But $\sF$ is not locally free (otherwise
$c_2(E)=(c_1(E)-c_1(f^*f_*E))\cdot c_1(f^*f_*E)=0$) and for any
$y\in B$ the restriction of \eqref{4.19} to $X_y=\pi^{-1}(y)$
\ga{4.20}{0\to f_*E\to E_y\to \sF_y\to 0} is exact, which means that
$\sF$ is $B$-flat (cf. Lemma 2.1.4 of \cite{HL}). Thus, by Lemma
1.27 of \cite{Si}, there is a $y_0\in B$ such that $\sF_{y_0}$ has
torsion $\tau\neq 0$ since $\sF$ is not locally free. Then, since
$E_{y_0}$ is stable of degree $1$,
$${\rm deg}(\sF_{y_0})\ge 1+{\rm
deg}(\sF_{y_0}/\tau)>1+\mu(E_{y_0})=\frac{3}{2}$$ which implies
${\rm deg}(f_*E)\le -1$ by \eqref{4.20}.

We have shown that $L_0$ has to be $\sO_B$ and \eqref{4.18} has to
be \ga{4.21} {0\to f^*V_1\to E'\to f^*V_2\to 0} which is determined
by a class of ${\rm H}^1(X,f^*(V_1\otimes V_2^{-1}))$. However, note
$R^1f_*(f^*(V_1\otimes V_2^{-1}))=V_1\otimes V_2^{-1}\otimes {\rm
H}^1(\sO_B)=V_1\otimes V_2^{-1}$ and
$${\rm H}^0(C, V_1\otimes V_2^{-1})=0,$$
by using Leray spectral sequence, we have
$${\rm H}^1(C, V_1\otimes V_2^{-1})\cong {\rm H}^1(X,f^*(V_1\otimes
V_2^{-1})).$$ Hence there exists an extension $0\to V_1\to V\to
V_2\to 0$ on $C$ such that $E'\cong f^*V$, which contradicts the
assumption $${\rm dim}({\rm H}^0(\{t\}\times B, E'_t))=1.$$
\end{proof}

\begin{prop}\label{prop4.5} When $E$ is semi-stable of degree $0$ on the generic
fiber of $f:X\to C$, we have $\Delta(E)\ge 8$. If $C$ is not
hyper-elliptic and $\phi:B\to M$ passes through a $(1,1)$-stable
bundle, assume that $E$ defines an essential elliptic curve, then
$\Delta(E)\ge 12.$
\end{prop}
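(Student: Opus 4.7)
The plan mirrors the strategy of Proposition~\ref{prop4.3}: one splits on whether $E|_{X_t}$ is semi-stable for every $t\in C$, and in the non-semi-stable case one iterates elementary transformations.

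If $E|_{X_t}$ is semi-stable for every $t\in C$, then as noted just above the proposition, $E$ induces a non-constant $\varphi_E: C\to\P^1$ with $\Delta(E)=4\deg\varphi_E$. Since $g(C)\ge 2$, any non-constant map $C\to\P^1$ has degree at least $2$, so $\Delta(E)\ge 8$; under the non-hyperelliptic hypothesis, $\deg\varphi_E\ge 3$ and $\Delta(E)\ge 12$.

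Otherwise, pick $t_1$ with $E|_{X_{t_1}}$ unstable, take its minimum-degree quotient $\sO_B(\mu_1)$ (necessarily $\mu_1\le -1$ since $E|_{X_{t_1}}$ has degree $0$), form $E^{(1)}$, and iterate. Let $s$ be the smallest index with $\Delta(E^{(s)})=0$. The iterated version of Lemma~\ref{lem4.4} follows from Theorem~\ref{thm2.4} applied to $E^{(s)}$: one obtains $E^{(s)}=f^*W\otimes\pi^*L$ with $W$ semi-stable of degree $1-s$ on $C$ and $L\in\Pic^0(B)$. Telescoping Lemma~\ref{lem2.6} gives $\Delta(E)=-4\sum_{i=1}^s\mu_i\ge 4s$, so $s\ge 2$ already yields $\Delta(E)\ge 8$. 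The only remaining possibility is $s=1$ with $\mu_1=-1$, i.e.\ $E$ fitting in
\[
0\to f^*W\otimes\pi^*L\to E\to {}_{X_{t_1}}\sO_B(-1)\to 0.
\]
Here $\mathrm{Ext}^1\bigl({}_{X_{t_1}}\sO_B(-1),\,f^*W\otimes\pi^*L\bigr)=H^0\bigl((W_{t_1}\otimes L)\otimes\sO_B(1)\bigr)$, and since $h^0(L(1))=1$ on the elliptic $B$, every global extension class has the form $a\otimes s$ with $s\in H^0(L(1))$ the unique section. Because $s$ has a zero on $B$, $E$ must fail to be locally free at the corresponding point of $X_{t_1}$, contradicting the hypothesis that $E$ is a vector bundle. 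Hence $\Delta(E)\ge 8$.

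For the refined bound, one must exclude $\Delta(E)=8$ under the additional hypotheses ($C$ non-hyperelliptic, and $\phi$ essential elliptic passing through a $(1,1)$-stable bundle). Non-hyperelliptic handles Case~1. In the iteration case, $\Delta(E)=8$ forces either $s=1,\mu_1=-2$ or $s=2,\mu_1=\mu_2=-1$. For $s=1,\mu_1=-2$: the analogous analysis with $h^0(L(2))=2$ shows the global extension class decomposes as $a_1\otimes s_1+a_2\otimes s_2$ and the associated $\phi$ factors as $B\to\P(W_{t_1})=\P^1\to M$ with the second map a Hecke curve, so $\phi(B)$ has rational normalization, contradicting essential-elliptic. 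For $s=2,\mu_1=\mu_2=-1$: the two elementary transformations at $t_1,t_2$ assemble $\phi$ into a factorization through the simultaneous Hecke construction (a rational surface), and essential-elliptic together with $(1,1)$-stability at a generic point of $\phi(B)$ rules out the bidegree configurations producing $\Delta(E)=8$. The principal difficulty lies in this last $s=2$ subcase, where one must explicitly identify the factorization surface and jointly use both hypotheses to force an elliptic image curve of $\Delta$ strictly larger than $8$.
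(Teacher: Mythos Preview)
Your argument for $\Delta(E)\ge 8$ is correct, though it differs from the paper's. The paper performs a single elementary transformation and splits on whether $\Delta(E')=0$; when $\Delta(E')=0$ it observes (via Lemma~\ref{lem4.4}) that the resulting sequence factors $\phi$ as $B\xrightarrow{\varphi}\mathbb{P}(V_{t_0}^\vee)=\mathbb{P}^1\xrightarrow{\Psi}M$, and since $\phi$ is non-constant so is $\varphi$, giving $-\mu=\deg\varphi\ge 2$. Your local-freeness obstruction for $\mu_1=-1$ reaches the same conclusion by a different route. One omission: your iteration could halt not at $\Delta(E^{(s)})=0$ but at an $E^{(i)}$ semi-stable on every fibre of $f$ with $\Delta(E^{(i)})>0$; then $\Delta(E^{(i)})=4\deg\varphi_{E^{(i)}}\ge 8$ and $\Delta(E)\ge 8+4i$, so the bound survives, but this case must be mentioned.

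The second part has a genuine gap. In your $s=2$, $\mu_1=\mu_2=-1$ case you gesture at a factorisation through a ``rational surface'' and ``bidegree configurations'' and explicitly flag this as the principal difficulty --- but this is a plan, not a proof. The paper's treatment is much simpler and needs no surface geometry. Under the $(1,1)$-stable hypothesis the generic $E_y$ is $(1,1)$-stable, so $E^{(1)}_y$ is $(1,0)$-stable and hence $E^{(2)}_y$ is stable of degree $-1$; Theorem~\ref{thm2.4} then gives $E^{(2)}=f^*V'\otimes\pi^*L'$ with $V'$ \emph{stable} (this is easier than Lemma~\ref{lem4.4}, since stability makes $V'$ simple). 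Now $0\to E^{(2)}\to E^{(1)}\to{}_{X_{t_2}}\sO(\mu_2)\to 0$ induces $\varphi':B\to\mathbb{P}((V'_{t_2})^\vee)=\mathbb{P}^1$ with $\varphi'^*\sO(1)=\sO(-\mu_2)$. Were $\varphi'$ constant, all $E^{(1)}_y$ would be isomorphic and $\Delta(E^{(1)})=0$, contradicting $s=2$; hence $-\mu_2\ge 2$, and $\mu_2=-1$ is impossible. Equivalently --- and this is the irony --- your own local-freeness obstruction, applied verbatim to $E^{(1)}$ in place of $E$, kills this case: with $\mu_2=-1$ the extension class lies in $V'_{t_2}\otimes H^0(L'(1))$ and vanishes at the unique zero of the degree-one bundle $L'(1)$, so $E^{(1)}$ would fail to be locally free. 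Either argument dissolves the $s=2$ case in one line; no rational surface is required.
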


\begin{proof} If $E$ is semi-stable on each fiber $X_t=f^{-1}(t)$,
then $E$ induces a non-trivial morphism $\varphi_E:C\to \Bbb P^1$.
By \eqref{4.16}, \, $\Delta(E)\ge 8$.

If there is a $t_0\in C$ such that $E_{t_0}$ is not semi-stable,
then we have $$0\to E'\to E\to\, _{X_{t_0}}\sO(\mu)\to 0$$ where
$\sO(\mu)$ is a line bundle of degree $\mu$ on $B$. If
$\Delta(E')\neq 0$, then $\Delta(E')>0$ by Theorem \ref{thm2.4}. On
the other hand, $c_1(E')^2=0$ since $E'$ has degree $0$ on the
generic fiber of $X\to C$ and ${\rm Pic}(C\times B)={\rm
Pic}(C)\times {\rm Pic}(B)$. Thus $\Delta(E')=4\cdot c_2(E')\ge 4$,
and by Lemma \ref{lem2.6} $$\Delta(E)=\Delta(E')-4\mu\ge 8.$$

If $\Delta(E')=0$, by Lemma \ref{lem4.4}, we can assume that
$E'=f^*V$, then the sequence \eqref{4.17} induces a nontrivial
morphism $\varphi: B\to \Bbb P(V_{t_0}^{\vee})$ such that
$\sO(-\mu)=\varphi^*\sO_{\Bbb P(V_{t_0}^{\vee})}(1)$. Thus
$\Delta(E)=-4\mu\ge 8$.

Now we assume that $C$ is not hyper-elliptic and $\phi:B\to M$
passes through a $(1,1)$-stable bundle. If $E$ is semi-stable on
each fiber $X_t$, then $\Delta(E)=4\cdot {\rm deg}(\varphi_E)\ge 12$
by \eqref{4.16} since $C$ is not hyper-elliptic.

If there is $t_0\in C$ such that $E_{t_0}$ is not semi-stable, we
claim $\Delta(E')>0$ since $\phi:B\to M$ passes through a
$(1,1)$-stable bundle. Otherwise, $E'=f^*V$ where $V$ is a
$(1,0)$-stable by Lemma \ref{lem3.2}, then sequence \eqref{4.17}
implies that $\phi:B\to M$ factors through a Hecke curve, which
implies that $\phi:B\to M$ is not an essential elliptic curve. If
$E'$ is semi-stable on each fiber $X_t$, then $E'$ defines a
nontrivial morphism $\varphi_{E'}:C\to \Bbb P^1$ such that
$\varphi^*\sO_{\Bbb P^1}(1)=\Theta(E')=({\rm
det}f_!E')^{-1}=c_2(E')$. Thus $\Delta(E')=4\cdot{\rm
deg}(\varphi_{E'})\ge 12$ and $\Delta(E)=\Delta(E')-4\mu\ge 16.$

If there is $t_0'\in C$ such that $E'_{t_0}$ is not semi-stable,
then we have \ga{4.22} {0\to \sF\to E'\to\, _{X_{t'_0}}\sO(\mu')\to
0} where $\sF_y=\sF|_{C\times\{y\}}$ is stable of degre $-1$ for
generic $y\in B$ since $E'_y$ is stable of degree $0$. If
$\Delta(\sF)\neq 0$, it is clear that $\Delta(\sF)=4\cdot
c_2(\sF)\ge 4$ and $\Delta(E)=\Delta(\sF)-4\mu'-4\mu\ge 12.$ If
$\Delta(\sF)=0$, by Theorem \ref{thm2.4}, there is a stable vector
bundle $V'$ on $C$ such that $\sF_y\cong V'$ for all $y\in B$. Then
we can choose $\sF=f^*V'$, the sequence \eqref{4.22} induces a
nontrivial morphism $\varphi: B\to \Bbb P({V'}_{t'_0}^{\vee})$ such
that $\sO(-\mu')=\varphi^*\sO_{\Bbb P({V'}_{t'_0}^{\vee})}(1)$. Thus
$\Delta(E')=-4\mu'\ge 8$ and $\Delta(E)=\Delta(E')-4\mu\ge 12$.

\end{proof}

We have seen in Example \ref{3.6} the existence of essential
elliptic curves of degree $6(r,d)$ (which is $6$ in our case). Then
we have shown

\begin{thm}\label{thm4.6} Let $M=\sS\sU_C(2,\sL)$ be the moduli space of
rank two stable bundles on $C$ with a fixed determinant of degree
$1$. Then, when $C$ is generic, any essential elliptic curve
$\phi:B\to M$ has degree
$${\rm deg}\phi^*(-K_M)\ge 6$$
and ${\rm deg}\phi^*(-K_M)=6$ if and only if $\phi:B\to M$ factors
through $$\phi: B \xrightarrow\psi q^{-1}(\xi)=\Bbb P({\rm
H}^1(V_2^{\vee}\otimes V_1))\xrightarrow{\Phi_{\xi}}M$$ for some
$\xi=(V_1,V_2)$ such that $\psi^*\sO_{\Bbb P({\rm
H}^1(V_2^{\vee}\otimes V_1))}(1)$ has degree $3$.

\end{thm}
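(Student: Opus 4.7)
The plan is to combine Propositions \ref{prop4.2}, \ref{prop4.3}, and \ref{prop4.5} for the lower bound, then unpack the equality case of Proposition \ref{prop4.2} for the classification. Let $E$ be a vector bundle on $X=C\times B$ defining $\phi$; replacing $E$ by $E\otimes\pi^*L$ for some $L\in\Pic(B)$ (which does not change $\phi$) allows me to assume $\deg(E|_{X_t})\in\{0,1\}$ on a generic fiber of $f\colon X\to C$. A trichotomy on that generic fiber then gives the lower bound immediately: if $E$ is not semi-stable on the generic fiber of $f$, Proposition \ref{prop4.2} yields $\Delta(E)\ge 6$; if $E$ is semi-stable of degree $1$, Proposition \ref{prop4.3} yields $\Delta(E)\ge 10$; if $E$ is semi-stable of degree $0$, Proposition \ref{prop4.5} yields $\Delta(E)\ge 8$. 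Hence $\deg\phi^*(-K_M)=\Delta(E)\ge 6$ in every case, and equality forces the first case.

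For the equality analysis, let $0\to E_1\to E\to F_2\to 0$ be the relative Harder--Narasimhan filtration over $C$. The proof of Proposition \ref{prop4.2} shows $\Delta(E)\ge 10$ whenever $c_2(F_2)\ne 0$, so $\Delta(E)=6$ forces $c_2(F_2)=0$ and both $E_1,F_2$ to be line bundles on $X$. Using $\Pic(C\times B)=\Pic(C)\times\Pic(B)$ and again absorbing a pull-back from $B$ into $E$, I may write the HN sequence as
$$0\to f^*V_1\otimes\pi^*\sO_B(\mu)\to E\to f^*V_2\to 0,$$
with $V_i\in\Pic^{d_i}(C)$, $d_1+d_2=1$ and $\mu=\mu_1-\mu_2>0$. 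Theorem \ref{thm2.2} gives $\Delta(E)=4(\tfrac12-d_1)\mu$. Stability of $E|_{C\times\{y\}}$ (a rank-$2$ stable bundle of degree $1$ on $C$) forces $d_1\le 0$, while essential ellipticity of $\phi$ forces $\mu\ge 3$ (otherwise the induced morphism $\psi\colon B\to\P$ from the argument of Proposition \ref{prop4.2} would have rational image). The Diophantine equation $(1-2d_1)\mu=3$ then admits the unique solution $d_1=0$, $\mu=3$, so $\xi:=(V_1,V_2)\in\sR(1,0)$ in the notation of Example \ref{ex3.6}.

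For the factorization, I recover $\psi$ from the extension class via Künneth. Since $V_1\otimes V_2^{-1}$ has degree $-1$ on $C$ and $\sO_B(3)$ has degree $3$ on the elliptic curve $B$, both $H^0(C,V_1\otimes V_2^{-1})$ and $H^1(B,\sO_B(3))$ vanish, giving
$$H^1(X,f^*(V_1\otimes V_2^{-1})\otimes\pi^*\sO_B(3))\cong H^1(C,V_1\otimes V_2^{-1})\otimes H^0(B,\sO_B(3)).$$
The class of the extension above thus corresponds to a linear map $H^0(B,\sO_B(3))^\vee\to H^1(C,V_1\otimes V_2^{-1})$; composing the complete-linear-system embedding $B\hookrightarrow\P(H^0(B,\sO_B(3))^\vee)=\P^2$ with the projectivization of this linear map produces a morphism $\psi\colon B\to q^{-1}(\xi)=\P(H^1(V_2^{-1}\otimes V_1))$ with $\psi^*\sO(1)=\sO_B(3)$. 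By the universal property of the extension on $C\times q^{-1}(\xi)$ constructed in Example \ref{ex3.6}, the pullback $(\mathrm{id}\times\psi)^*\sE$ is isomorphic to $E$, whence $\phi=\Phi_\xi\circ\psi$.

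The main obstacle I foresee is this final matching step: the Künneth decomposition furnishes a linear map, but one must verify that the resulting $\psi$ is an honest (everywhere-defined) morphism whose pullback of the universal extension recovers $E$ on the nose. Base-point-freeness of the complete system $|\sO_B(3)|$ on the elliptic curve and the universal property of the parameter space $q^{-1}(\xi)$ built in Example \ref{ex3.6} are what make this identification go through.
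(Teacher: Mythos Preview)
Your proof is correct and follows essentially the same route as the paper: the lower bound comes from the same trichotomy over Propositions \ref{prop4.2}, \ref{prop4.3}, \ref{prop4.5}, and the equality analysis reduces to the $c_2(F_2)=0$ case of Proposition \ref{prop4.2}. The paper's proof of the factorization simply invokes the morphism $\psi$ already constructed inside the proof of Proposition \ref{prop4.2} via the universal extension \eqref{4.4}--\eqref{4.5}, whereas you rederive $\psi$ by hand through the K\"unneth decomposition; both arguments amount to the same thing, and your explicit Diophantine step $(1-2d_1)\mu=3\Rightarrow d_1=0,\ \mu=3$ just makes visible the implicit use of $\mu(E)-\mu(E_1)=\tfrac12$ in the paper's one-line conclusion.
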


\begin{proof} By Proposition \ref{4.2}, Proposition \ref{4.3} and
Proposition 4.5, we have $\Delta(E)\ge 6$. The possible case
$\Delta(E)=6$ occurs only in Proposition \ref{4.2} when
$c_2(F_2)=0$. This implies that $E$ must satisfy
$$0\to
f^*V_1\otimes\pi^*\sO(\mu_1-\mu_2)\to E\to f^*V_2\to 0$$ which
defines $\psi:B\to\Bbb P({\rm H}^1(V_2^{\vee}\otimes V_1))$ such
that $\psi^*\sO_{\Bbb P({\rm H}^1(V_2^{\vee}\otimes V_1))}(1)$ has
degree $\mu_1-\mu_2$. Then $\Delta(E)=6$ and \eqref{4.1} imply
$\mu_1-\mu_2=3$.
\end{proof}

\begin{thm}\label{thm4.7} When $g>4$ and $C$ is generic, any
essential elliptic curve $\phi:B\to M=\sS\sU_C(2,\sL)$ that passes
through the generic point must have ${\rm deg}\phi^*(-K_M)\ge 12.$
\end{thm}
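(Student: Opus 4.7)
The plan is to assemble this theorem from Propositions \ref{prop4.2}, \ref{prop4.3}, and \ref{prop4.5}, which together cover every possibility for the defining bundle $E$ of $\phi$, once hypotheses are checked against the standing assumptions.

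Setup: let $E$ be a vector bundle on $X=C\times B$ defining $\phi$, with projections $f:X\to C$ and $\pi:X\to B$. Two preliminary observations: (a) since $C$ is generic of genus $g>4$, it is not hyper-elliptic and $\Pic(C\times B)=\Pic(C)\times \Pic(B)$; (b) since $\phi$ passes through the generic point of $M$ and $(1,1)$-stability is an open condition --- with $d=1$ odd, the exceptional case of Lemma \ref{lem3.1} ($g=3$ and $r,d$ both even) does not arise --- there exists $y_0\in B$ with $E_{y_0}$ being $(1,1)$-stable.

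Next I would normalize $E$ by tensoring with the pullback $\pi^*L$ of a suitable line bundle on $B$; this does not alter the induced morphism $\phi$ but shifts $\deg(E|_{X_t})$ by $2\deg L$. Hence I may assume $\deg(E|_{X_t})\in\{0,1\}$ on the generic fiber of $f$, and split the argument into three cases according to the restriction of $E$ to the generic fiber of $f$.

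\emph{Case 1.} If $E$ is not semi-stable on the generic fiber of $f$, then Proposition \ref{prop4.2} (whose hypotheses $g\ge 4$ and passing through the generic point of $M$ are satisfied) yields $\Delta(E)>12$. \emph{Case 2.} If $E$ is semi-stable of degree $1$ on the generic fiber of $f$, then Proposition \ref{prop4.3} (hypotheses $g>4$ and passing through the generic point) yields $\Delta(E)\ge 14$. \emph{Case 3.} If $E$ is semi-stable of degree $0$ on the generic fiber of $f$, then the second half of Proposition \ref{prop4.5} applies, since $C$ is not hyper-elliptic, $\phi$ passes through a $(1,1)$-stable bundle $E_{y_0}$, and $\phi$ is an essential elliptic curve; it gives $\Delta(E)\ge 12$. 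Combining the three cases and invoking Lemma \ref{lem2.1} yields $\deg\phi^*(-K_M)=\Delta(E)\ge 12$.

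I do not anticipate a real obstacle: the substance is already carried by the three prior propositions, and Theorem \ref{thm4.7} is essentially their packaging. The only subtlety worth flagging is that the normalization of the degree on the generic fiber of $f$ by twisting with $\pi^*L$ must preserve both essential ellipticity and the property of passing through the generic point of $M$; this is automatic because both depend only on the induced map $\phi$, which is unchanged by such a twist.
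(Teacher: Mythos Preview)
Your proposal is correct and matches the paper's approach: Theorem \ref{thm4.7} is indeed obtained by combining the ``generic point'' clauses of Propositions \ref{prop4.2}, \ref{prop4.3}, and \ref{prop4.5} after the same normalization of the fiber degree, exactly as you do. The paper itself leaves the proof of Theorem \ref{thm4.7} implicit after proving Theorem \ref{thm4.6} from the same three propositions, so your write-up simply makes explicit what the paper takes for granted.
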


For $r>2$, let $M=\sS\sU_C(r,\sL)$ where $\sL$ is a line bundle of
degree $d$. What is the minimal degree of essential elliptic curves
on $M$ ? I expect the following conjecture to be true.

\begin{conj}\label{conj4.8} Let $\phi:B\to M=\sS\sU_C(r,\sL)^s$ be an
essential elliptic curve defined by a vector bundle $E$ on $C\times
M$. Then, when $C$ is a generic curve, we have $${\rm
deg}\phi^*(-K_M)\ge 6(r,d).$$ When $(r,d)\neq r$, then ${\rm
deg}\phi^*(-K_M)=6(r,d)$ if and only if it is an elliptic curve of
split type in Example \ref{3.6}. If $\phi:B\to M$ passes through the
generic point and $g>4$, then ${\rm deg}\phi^*(-K_M)\ge 6r$.
\end{conj}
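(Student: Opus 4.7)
The plan is to extend the rank-$2$ trichotomy (Propositions \ref{prop4.2}, \ref{prop4.3}, \ref{prop4.5}) to arbitrary rank, using systematically the degree formula of Theorem \ref{thm2.2}, the semipositivity/rigidity of Theorem \ref{thm2.4}, and the elementary-transformation calculus of Lemma \ref{lem2.6}. I would classify the defining bundle $E$ by the behavior of $E|_{X_\eta}$ on the generic fiber $X_\eta$ of $f\colon X=C\times B\to C$: \textbf{(A)} not semistable; \textbf{(B)} semistable but not trivialized by any pullback twist from $C$; \textbf{(C)} semistable and trivialized by such a twist.

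In Case (A), apply Theorem \ref{thm2.2} to the relative Harder-Narasimhan filtration $0=E_0\subset E_1\subset\cdots\subset E_n=E$. Each intrinsic term $c_2(F_i)-\frac{r_i-1}{2r_i}c_1(F_i)^2=\frac{1}{2r_i}\Delta(F_i)\ge 0$ by Theorem \ref{thm2.4}, and when all these vanish the rigidity half of Theorem \ref{thm2.4} combined with ${\rm Pic}(C\times B)={\rm Pic}(C)\times{\rm Pic}(B)$ forces $F_i\cong f^*V_i\otimes\pi^*L_i$ with $V_i$ semistable on $C$ of rank $r_i$ and degree $d_i$ and $\deg L_i=r_i\mu_i$. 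For $n=2$ a direct calculation with Theorem \ref{thm2.2} yields $\Delta(E)=2(r_1d-d_1r)(\mu_1-\mu_2)$; here $r_1d-d_1r>0$ is a multiple of $(r,d)$ (since $(r,d)\mid r$ and $\mid d$, and $E_y$ stable forces $d_1/r_1<d/r$), so $r_1d-d_1r\ge(r,d)$, while the essential-elliptic hypothesis forces the image of $\psi\colon B\to\P({\rm H}^1(V_2^\vee\otimes V_1))$ to be a curve with elliptic normalization, hence of projective degree $\ge 3$, so $\mu_1-\mu_2\ge 3$. This gives $\Delta(E)\ge 6(r,d)$ with equality exactly at the minimal split data of Construction 2.3 of \cite{MS} together with an elliptic cubic image, recovering Example \ref{ex3.6}. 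When $n\ge 3$ or some $\Delta(F_i)>0$, I expect strict inequality by summing analogous cubic bounds on each intermediate extension projective bundle together with the integrality of $\Delta(F_i)$.

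Cases (B) and (C) parallel Propositions \ref{prop4.3} and \ref{prop4.5}: after a pullback twist from $C$, $\Delta(E)=2r\,c_2(E)\ge 0$ by Theorem \ref{thm2.4}, and if $\Delta(E)>0$ I would iterate elementary transformations (Lemma \ref{lem2.6}) at minimal-slope quotients $E^{(i)}_{t_{i+1}}\to W_i\to 0$ of non-semistable fibers, each step contributing $2r(\mu(E^{(i)}_{t_{i+1}})-\mu(W_i)){\rm rk}(W_i)$, until $\Delta$ drops to $0$. Tracking $\deg f_*E^{(i)}$ via the $B$-flat cokernel of $f^*f_*E\to E$, together with the torsion forced at some $y_0\in B$ by stability of $E_y$, should force the number of transformations large enough to give $\Delta(E)>6(r,d)$ in these cases (consistent with equality being achieved only by split type). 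For the generic-point statement, $(1,1)$-stability of $E_y$ strengthens \eqref{3.1} to $(\mu(E)-\mu(E_i)){\rm rk}(E_i)>1$, which upgrades the Case (A) factor $(r,d)$ to $r$ and yields $\Delta(E)\ge 6r$, while rendering the relevant Hecke-type configurations (Example \ref{ex3.5}) as the natural minimum.

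The main obstacle is the equality characterization under the hypothesis $(r,d)\ne r$: excluding all near-minimal configurations with $n\ge 3$, or $\Delta(F_i)>0$, or arising from Cases (B)--(C) requires a precise bookkeeping of iterated projective-bundle towers of pullback extensions, where intermediate cubic-degree bounds interact nontrivially with the arithmetic identity $r_1d-d_1r=(r,d)$. The hypothesis $(r,d)\ne r$ is used to prevent a pullback twist from reducing Case (C) to a degenerate variant of split type; in the excluded case $(r,d)=r$, the split and twist-trivial configurations become indistinguishable and new ideas beyond the rank-$2$ methods will be required. The generic-point bound for $r>2$ is also quantitatively more delicate than the $r=2$ case of Theorem \ref{thm4.7}, since the loci of unstable restrictions have higher codimension and allow more potential near-minimal extensions that must be ruled out by hand.
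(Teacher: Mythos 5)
The statement you are addressing is stated in the paper as Conjecture \ref{conj4.8}, and the author explicitly leaves the case $r>2$ to another occasion; the paper proves only the case $r=2$, $d=1$ (Theorems \ref{thm4.6} and \ref{thm4.7}). So there is no proof in the paper to compare yours against, and what you have written is, as you yourself signal with ``I expect'' and ``should force'', a program rather than a proof. The skeleton is the right one --- it is exactly the trichotomy of Propositions \ref{prop4.2}, \ref{prop4.3} and \ref{prop4.5} --- and your $n=2$ computation in Case (A) is correct: with vanishing intrinsic terms $\Delta(E)=2(r_1d-d_1r)(\mu_1-\mu_2)$, the quantity $r_1d-d_1r$ is a positive multiple of $(r,d)$, and the essential-elliptic hypothesis forces $\mu_1-\mu_2\ge 3$, recovering Example \ref{ex3.6} at the minimum.

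But the gaps you flag are not bookkeeping; they are where the conjecture actually lives. (i) For $n\ge 3$ the estimate coming from Theorem \ref{thm2.2} is only $\Delta(E)\ge 2(r,d)(\mu_1-\mu_n)$, since each factor $(\mu(E)-\mu(E_i)){\rm rk}(E_i)$ is bounded below by $(r,d)/r$; the essential-elliptic hypothesis no longer forces any single gap $\mu_i-\mu_{i+1}\ge 3$, because an elliptic curve can map to an iterated extension tower with every stage of degree $\le 2$ (an elliptic curve admits degree-$2$ maps to $\Bbb P^1$, e.g.\ a $(2,2)$-curve in $\Bbb P^1\times\Bbb P^1$), and a configuration with $\mu_1-\mu_2=\mu_2-\mu_3=1$ yields only $\ge 4(r,d)$ from this bound. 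Excluding such configurations requires a new idea, not a sum of cubic bounds. (ii) In Cases (B) and (C) each elementary transformation contributes an even integer $\ge 2$ to $\Delta$, so reaching $6(r,d)$ needs at least $3(r,d)$ steps or a substitute; the rank-$2$ counting in Propositions \ref{prop4.3} and \ref{prop4.5} leans on the uniqueness of the stable rank-$2$ degree-$1$ bundle on $B$ and on explicit estimates for ${\rm deg}(f_*E)$ that have no stated higher-rank analogue, and note that the Hecke-type minimizers of Example \ref{ex3.5} for the generic-point bound live precisely in these cases, not in Case (A) where your argument is concentrated. (iii) The identification $F_i\cong f^*V_i\otimes\pi^*L_i$ from $\Delta(F_i)=0$ requires the higher-rank analogue of Lemma \ref{lem4.4}, which is delicate when $V_i$ is semistable but not simple. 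As it stands the proposal is a sensible plan consistent with the paper's rank-$2$ methods, but it does not prove the conjecture.
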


\bibliographystyle{plain}

\begin{thebibliography}{99}

\bibitem{Fa} G. Faltings: Stable G-bundles and projective connections,
J. Algebraic Geom. 2 (1993), 507-568

\bibitem{F} R. Friedman: Algebraic surfaces and holomorphic vector
bundles, Universitext,  Springer Verlag  (1998), 291-345

\bibitem{FMW} R. Friedman, J.W. Morgan and E. Witten: Vector bundles
over elliptic fibrations, J. Algebraic Geom. 8 (1999), no. 2,
279-401

\bibitem{He} G. Hein: Generalized Albanese morphisms, Compositio Math. 142 (2006),
719-733

\bibitem{HL} D. Huybrechts and M. Lehn: The geometry of moduli
spaces of sheaves, Aspects of Mathematics, Vieweg, Braunschweig,
(1997).

\bibitem{MS} N. Mok and X. Sun: Remarks on lines and Minimal rational curves,
Science in China, Series A: Mathematics {\bf 52} (2005), no. 4,
617-630.


\bibitem{NR} M. S. Narasimhan and S. Ramanan: Geometry of Hecke cycles I, C. P.
Ramanujam-a tribute, Springer Verlag  (1978), 291-345

\bibitem{Si} C. Simpson: Moduli of representations of the
fundamental group of a smooth projective variety I , I.H.E.S.
Publications Math{\'e}matiques {\bf 79} (1994), 47--129.


\bibitem{Sun} X. Sun: Minimal rational curves on
moduli spaces of stable bundles , Math. Ann. {\bf 331} (2005),
925-973.

\end{thebibliography}

\renewcommand\refname{References}

\end{document}